\documentclass[11pt]{amsart}
\usepackage{epsfig}
\usepackage{amscd}
\usepackage{amsmath}
\usepackage{amsxtra}
\usepackage{amsfonts}
\usepackage{amssymb}

\oddsidemargin  0.0in
	\evensidemargin 0.0in
	\textwidth      6.5in
	\headheight     0.0in
	\topmargin      0.0in

\newtheorem{theorem}{Theorem}[section]
\newtheorem{corollary}[theorem]{Corollary}
\newtheorem{lemma}[theorem]{Lemma}
\newtheorem{proposition}[theorem]{Proposition}

\theoremstyle{definition}

\newtheorem{remark}[theorem]{Remark}

\newtheorem{example}[theorem]{Example}
\theoremstyle{remark}

\renewcommand{\theclaim}{\textup{\theclaim}}

\newtheorem*{acknowledgements}{Acknowledgements}

\numberwithin{equation}{section}

\def\openone%{\hbox{\upshape \small1\kern-3.3pt\normalsize1}}

{\mathchoice

{\hbox{\upshape \small1\kern-3.3pt\normalsize1}}

{\hbox{\upshape \small1\kern-3.3pt\normalsize1}}

{\hbox{\upshape \tiny1\kern-2.3pt\SMALL1}}

{\hbox{\upshape \Tiny1\kern-2pt\tiny1}}}

\makeatletter

\newbox\ipbox

\newcommand{\ip}[2]{\left\langle #1\, , \,#2\right\rangle}
\newcommand{\diracb}[1]{\left\langle #1\mathrel{\mathchoice

{\setbox\ipbox=\hbox{$\displaystyle \left\langle\mathstrut
#1\right.$}

\vrule height\ht\ipbox width0.25pt depth\dp\ipbox}

{\setbox\ipbox=\hbox{$\textstyle \left\langle\mathstrut
#1\right.$}

\vrule height\ht\ipbox width0.25pt depth\dp\ipbox}

{\setbox\ipbox=\hbox{$\scriptstyle \left\langle\mathstrut
#1\right.$}

\vrule height\ht\ipbox width0.25pt depth\dp\ipbox}

{\setbox\ipbox=\hbox{$\scriptscriptstyle \left\langle\mathstrut
#1\right.$}

\vrule height\ht\ipbox width0.25pt depth\dp\ipbox}

}\right. }

\newcommand{\dirack}[1]{\left. \mathrel{\mathchoice

{\setbox\ipbox=\hbox{$\displaystyle \left.\mathstrut
#1\right\rangle$}

\vrule height\ht\ipbox width0.25pt depth\dp\ipbox}

{\setbox\ipbox=\hbox{$\textstyle \left.\mathstrut
#1\right\rangle$}

\vrule height\ht\ipbox width0.25pt depth\dp\ipbox}

{\setbox\ipbox=\hbox{$\scriptstyle \left.\mathstrut
#1\right\rangle$}

\vrule height\ht\ipbox width0.25pt depth\dp\ipbox}

{\setbox\ipbox=\hbox{$\scriptscriptstyle \left.\mathstrut
#1\right\rangle$}

\vrule height\ht\ipbox width0.25pt depth\dp\ipbox}

} #1\right\rangle}

\newcommand{\beq}{\begin{equation}}

\newcommand{\eeq}{\end{equation}}

\def\blfootnote{\xdef\@thefnmark{}\@footnotetext}

%\input hooklonguparrow
%\input cyracc.def
%\font\eightcyr=wncyr8
%\def\cyr{\eightcyr\cyracc}

\hyphenation{wave-lets}\hyphenation{ in-fi-nite}\hyphenation{ con-vo-lu-tion}

\input xy
\xyoption{all}
\usepackage{amssymb}

%    Absolute value notation
\newcommand{\abs}[1]{\lvert#1\rvert}

\newcommand{\norm}[1]{\lvert \lvert#1\rvert \lvert }

\def\R{\mathbb{R}}
\def\N{\mathbb{N}}

\def\-{^{-1}}

\def\C{\mathbb{C}}

%    Blank box placeholder for figures (to avoid requiring any
%    particular graphics capabilities for printing this document).

\begin{document}

\title[Generalized Walsh bases and applications]{Generalized Walsh bases and applications}
\author{Dorin Ervin Dutkay}

\address{[Dorin Ervin Dutkay] University of Central Florida\\
	Department of Mathematics\\
	4000 Central Florida Blvd.\\
	P.O. Box 161364\\
	Orlando, FL 32816-1364\\
U.S.A.\\} \email{Dorin.Dutkay@ucf.edu}
\author{Gabriel Picioroaga}
\address{[Gabriel Picioroaga] University of South Dakota\\
          Department of Mathematical Sciences\\
          414 E. Clark St. \\
          Vermillion, SD 57069\\
U.S.A. \\} \email{Gabriel.Picioroaga@usd.edu}

\subjclass[2010]{42A38, 42C10,42C40,42-04}
\keywords{Conditional expectation, encoding, generalized Walsh functions, martingale, Maple, public key cryptography}

\begin{abstract} We investigate convergence properties of generalized Walsh series associated with signals $f\in L^1[0,1]$. We also show how the dependence of the generalized Walsh bases on $N\times N$ unitary matrices allows for applications in signal encoding and encryption, provided the signals are piece-wise constant on $N$-adic subintervals of $[0,1]$.

\end{abstract}
\maketitle \tableofcontents

\section{Introduction}
\par The Walsh basis functions form an orthonormal system that can be interpreted roughly as the discrete analog of classic sines and cosines. Unlike these the Walsh functions have several advantages: for example they take only two values $\pm 1$ on sub-intervals defined by dyadic fractions, thus making the computation of coefficients much easier. The Walsh functions  are connected to probability, e.g., the Walsh expansion can be seen as conditional expectation, and the partial sums form a Doob martingale. Moreover, partial sums converge a.e. for $L^1$ functions, which is not true of the classic exponential basis.
\par The  Walsh functions have found a wide range of applications: for example in modern communications systems (through the so-called Hadamard matrices, to recover information in the presence of noise and interference), signal processing (reconstruction of signals by means
of dyadic sampling theorems), and generally in computer science. For detailed accounts of the many areas of the applied sciences where the Walsh functions are used we refer the reader to the books \cite{AR}, \cite{Beach}, \cite{Har1}, and \cite{Har2}.
Next we describe briefly the classic Walsh system and some of its properties. These can be found in \cite{Walsh}, \cite{Fine} and references therein.  Let
$$\phi_n(t):=\left\{
\begin{array}{l l}
1, & \text{ if }t\in [0,\frac{1}{2^{n+1}})\cup  [\frac{3}{2^{n+1}},  \frac{4}{2^{n+1}})\cup\cdots\\
-1, & \text{ if }t\in [ \frac{1}{2^{n+1}}  ,\frac{2}{2^{n+1}})\cup  [\frac{4}{2^{n+1}},  \frac{5}{2^{n+1}})\cup\cdots\
\end{array}\right.$$
Usually the dyadic endpoints are not included as above and the value of $\phi_n$ is taken to be zero there (i.e. jump average). Same extension is considered for the Walsh functions. Here we are not affected by this as we are not concerned with convergence questions at such dyadic rationals. Now to define the $n$th Walsh function let $n=2^{n_1}+2^{n_2}+...+2^{n_k}$ be the base-2 expansion of $n$. Then the classic Walsh functions $(W_n)_{n\geq 0}$ defined by
$$W_n(t):=\phi_{n_1}(t)\phi_{n_2}(t)\dots \phi_{n_k}(t)$$
form an orthonormal basis (ONB) for the Hilbert space $L^2[0,1]$. There are certain features that make this ONB more desirable to work with than for example the Fourier system. As we will point out below in more detail, Walsh series associated to $f\in L^1[0,1]$ converge pointwise a.e. to $f$. This is also true for $f$ with bounded variation at a continuity point of $f$.\par
Various interpretations of the Walsh ONB have been given, in \cite{Fine}, \cite{Morgen}, and generalizations in \cite{Chr}, \cite{LiT}, \cite{Vil}. E.g. for the dyadic group $G$ the Walsh functions can be viewed as characters on $G$, or more generally starting with \cite{Vil}, as characters of a zero-dimensional, separable group. A generalized Walsh system based on $N$-adic numbers and exponentials functions can be found in \cite{Chr}, and  has been used to construct algorithms for polynomial lattices (a particular kind of digital net which in turn can be used in sampling methods for multivariate integration), see \cite{Dik1}, \cite{Dik2} and their references. However the Walsh-like system in \cite{DPS} which inspired the present work seems to be new: roughly, this new generalization of the Walsh ONB depends on certain unitary matrices (constant first row) and a simple IFS (iterated function system implemented by map $r$ below).
In \cite{DPS}, Theorem 3.1 gives a criteria to obtain ONBs based on Cuntz algebra representations. One byproduct (Proposition 3.10) recovers the classic Walsh ONB, and another (Theorem 3.11) generalizes it as follows: Start with an integer $N\geq 2$, and $A=[a_{ij}]_{i=0,N-1}^{j=0,N-1}$ a unitary matrix such that its first row entries are all equal to $\frac{1}{\sqrt{N}}$. For $0\leq i\leq N-1$ define
$$m_i(x):=\sqrt{N}\sum_{j=0}^{N-1}a_{ij}\chi_{[j/N, (j+1)/N)}(x)$$
Notice $m_0(x)=1$, $\forall x\in [0,1]$.
Denote by $r$ the map  $r(x):=(Nx)\mbox{mod}1=Nx-l \text{ if }x\in [l/N, (l+1)/N]$ where $l\in\{0,1,.., N-1\}$. With $n$ nonnegative integer written in its base-$N$ expansion $n=\sum_{k\geq 0}^{p-1}i_kN^k$,  the $n$'th Walsh function associated to matrix $A$ is
\beq\label{gw}
W_{n,A}(x):=m_{i_0}(x)m_{i_1}(rx)\dots m_{i_p}(r^{p-1}x)
\eeq
where $i_0, i_1,..,i_{p-1}$ are the non zero coefficients of the expansion;  $m_0\equiv 1$ so there's no need to display it in the product. Notice $W_{0,A}\equiv 1$.
When $N=2$ one obtains the classic Walsh system by picking the unitary matrix to be $$A:=\begin{pmatrix}
  1/\sqrt{2} &  1/\sqrt{2} \\
  1 & -1
 \end{pmatrix}$$
and the Rademacher functions are obtained as $\phi_n(t)=m_1(r^nt)$.
\par In the next section we prove convergence results for the generalized Walsh series formed with (\ref{gw}) (Theorem \ref{t1}, and Corollary \ref{uni}). In Example \ref{e1} we have implemented a generalized Walsh system with the aid of the mathematical software Maple to point out issues with the convergence of arbitrary generalized Walsh series. Corollary \ref{c1} helps define the discrete generalized Walsh transform and is very instrumental in our Maple computations. We end the section with Theorem \ref{mart}, and Corollary \ref{pconv} where we show that the connections with probability are still maintained: the generalized Walsh partial sums (of type $N^q$ ) form martingales, and their series behave as conditional expectations that converge in $L^p$.
\par While applications in signal processing could have been investigated, due to the multitude of generalized Walsh systems (each being associated to a unitary matrix) we found it natural to consider data encryption: in the last section of the paper we find a sufficient condition that two unitary matrices should satisfy in order to have secret communication in the spirit of public key cryptography, between two users each possessing a generalized Walsh transform. However as our remarks and examples indicate, a successful protocol depends on whether certain zero-dimensional systems of polynomial (quadratic) equations have infinitely many solutions.

\section{Pointwise Convergence}
We study convergence properties of the new orthonormal bases, and show that some of the convergence results from \cite{Kcz}, and \cite{Walsh} extend for any $N\geq 2$. For $N=2$  the theorem below was obtained by Walsh (with $f$ continuous), and Kaczmarz ($f\in L^1$), see also \cite{Fine} and references therein.

\begin{theorem}\label{t1} For $f\in L^1[0,1]$ the sequence of partial sums
$$S_{N^q}(x)=\sum_{n=0}^{N^q-1}\ip{f}{W_{n,A} }W_{n,A}(x)$$
converges a.e. to $f(x)$.
\end{theorem}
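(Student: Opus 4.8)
The plan is to identify each block partial sum $S_{N^q}$ with the averaging (conditional expectation) operator onto the $N$-adic intervals of generation $q$, and then to invoke a classical martingale / differentiation theorem. First I would introduce $V_q$, the space of functions on $[0,1]$ that are constant on each $N$-adic interval $I_l^q := [l/N^q, (l+1)/N^q)$, $l=0,\ldots,N^q-1$; this is an $N^q$-dimensional subspace of $L^2[0,1]$ with orthonormal basis $\{\sqrt{N^q}\,\chi_{I_l^q}\}_l$. The key structural claim is that $\{W_{n,A}: 0\le n\le N^q-1\}$ is another orthonormal basis of $V_q$. To see the inclusion $W_{n,A}\in V_q$ for $n<N^q$, write $n=\sum_{k=0}^{p-1}i_kN^k$ with $p\le q$; since $r$ is affine on each interval of length $1/N$, the iterate $r^k$ is affine with slope $N^k$ on each $N$-adic interval of length $N^{-k}$, so each factor $m_{i_k}(r^kx)$ is constant on $N$-adic intervals of length $N^{-(k+1)}$. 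The finest factor occurs at $k=p-1\le q-1$ and is constant on intervals of length $N^{-p}\ge N^{-q}$, whence the product $W_{n,A}$ is constant on each $I_l^q$. Since the $W_{n,A}$ are orthonormal (they are members of the $L^2$-ONB) and there are exactly $N^q=\dim V_q$ of them, they span $V_q$.

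With this identification, for $f\in L^2[0,1]$ the operator $f\mapsto S_{N^q}f$ is simply the orthogonal projection onto $V_q$, which is basis-independent; computing it instead in the basis $\{\sqrt{N^q}\,\chi_{I_l^q}\}_l$ gives the explicit averaging formula
$$ S_{N^q}f=\sum_{l=0}^{N^q-1}\left(N^q\int_{I_l^q}f\right)\chi_{I_l^q}, $$
that is, $(S_{N^q}f)(x)=N^q\int_{I^q(x)}f$, where $I^q(x)$ denotes the $N$-adic interval of generation $q$ containing $x$. To pass from $L^2$ to $L^1$, I would observe that both sides above define bounded operators $L^1[0,1]\to L^1[0,1]$ — the Walsh coefficients are controlled by $\ip{f}{W_{n,A}}\le \norm{f}_1\,\norm{W_{n,A}}_\infty$, and the averaging operator is an $L^1$-contraction — so, agreeing on the dense subspace $L^2[0,1]$, they agree on all of $L^1[0,1]$.

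Finally, the functions $x\mapsto N^q\int_{I^q(x)}f$ are precisely the conditional expectations $\be[f\mid\F_q]$ relative to the increasing filtration $\F_q$ generated by the $N$-adic intervals of generation $q$, whose union generates the Borel $\sigma$-algebra of $[0,1]$. Doob's martingale convergence theorem (equivalently, the Lebesgue differentiation theorem applied to the regularly shrinking nested intervals $I^q(x)$, since $|I^q(x)|=N^{-q}\to 0$) then yields $S_{N^q}f\to f$ almost everywhere for every $f\in L^1[0,1]$, as desired. I expect the main work to be the span identification of the first paragraph — verifying that each $W_{n,A}$ with $n<N^q$ is an $N$-adic step function of generation $q$, via the affine scaling of the iterates $r^k$ — since once $S_{N^q}$ is recognized as a conditional expectation the convergence is classical.
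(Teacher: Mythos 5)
Your proposal is correct, but it reaches the central identity $S_{N^q}f(x)=N^q\int_{I^q(x)}f$ by a genuinely different route than the paper. The paper works directly with the Dirichlet kernel $D_{N^q}(x,t)=\sum_{n<N^q}W_{n,A}(x)\overline{W_{n,A}(t)}$: it factors it as $\prod_{j=0}^{q-1}\bigl[1+\sum_{i=1}^{N-1}m_i(r^jx)\overline{m_i(r^jt)}\bigr]$ and uses the column orthogonality of the unitary matrix $A$ to show that each factor equals $N$ when $t$ lies in the same generation-$q$ $N$-adic interval as $x$, while some factor vanishes otherwise; this gives the kernel formula (\ref{dixt}), the averaging identity, and then the Lebesgue-point argument finishes, all carried out directly for $f\in L^1$. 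You instead avoid any computation with the entries of $A$: you note that the $N^q$ functions $W_{n,A}$, $n<N^q$, are orthonormal and lie in the $N^q$-dimensional space $V_q$ of generation-$q$ step functions (your verification that each $W_{n,A}$ is a generation-$q$ step function is the same scaling observation the paper makes for $r^j$), hence form an orthonormal basis of $V_q$ by dimension count, so $S_{N^q}$ is the orthogonal projection onto $V_q$ and can be recomputed in the indicator basis. This is shorter and more conceptual: it makes the conditional-expectation structure visible at once --- the paper only establishes $\mathbb{E}[f|\mathcal{F}_q]=S_{N^q}(f)$ later, in Theorem \ref{mart} --- and it would apply verbatim to any orthonormal system whose initial blocks span the step-function spaces. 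What it costs: (i) it leans on the orthonormality of the system $\{W_{n,A}\}$, a fact imported from \cite{DPS}, whereas the paper's kernel computation needs only unitarity of $A$ and is self-contained; (ii) the projection identity is intrinsically an $L^2$ statement, so you need the extra (correct) boundedness-and-density step to transfer the averaging formula to $L^1$, a step the paper's pointwise kernel identity makes unnecessary; (iii) the explicit formula (\ref{dixt}) is reused downstream (Corollary \ref{c1}, the discrete transform, Theorem \ref{mart}), so the computational route has side benefits that the softer argument does not provide.
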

\begin{proof}
We show that if $x\in [0,1]$ is a Lebesque point of $f$ then the generalized Walsh series $S_{N^q}$ converges to $f(x)$. The calculations are based on the orthogonality conditions that the columns/rows of matrix $A$ satisfy. We will also write $W_n$ instead of $W_{n,A}$ (as long as we deal with a fixed $A$).
To not clutter our expressions we will consider the case $N=3$ and point out how/why the arbitrary dimensional analogue carries through.
We set out to prove a couple of properties of the Dirichlet kernel $D_q(x,t)$. Let us recall that in general $S_q=\int_0^1f(t)D_{q}(x,t)dt$ where
$D_{q}(x,t)=\sum_{n=0}^{q-1}W_n(x)\overline{W_n(t)}$. Notice first
\beq\label{di}
D_{3^q}(x,t)= \prod_{j=0}^{q-1}[1+m_1(r^jx)\overline{m_1(r^jt)}+ m_2(r^jx)\overline{m_2(r^jt)}]
\eeq
The relationship is easily checked by multiplying through all parentheses in the righthand side and using (\ref{gw}). If $N$ is arbitrary then the generic factor in the product above is of the form $[1+m_1(r^jx)\overline{m_1(r^jt)}+ m_2(r^jx)\overline{m_2(r^jt)}+...+m_{N-1}( r^jx )\overline{m_{N-1}(r^jt)}]$.
\par For $x\in [0,1]$ and $q\in\N$ there exists a unique $m=m(q, x)\in\{0,1,.., 3^q-1\}$ such that \\
$\alpha_{q,x}:=\frac{m}{3^q}\leq x< \beta_{q,x}:=\frac{m+1}{3^q}$.
We claim the following formula holds (with obvious $N^q$ replacement in the general case):

\beq\label{dixt}
D_{3^q}(x,t)=\left\{
  \begin{array}{l l}
    3^q & \quad \text{if } t\in (\alpha_{q,x},\beta_{q,x}) \\
   0 & \quad  \text{otherwise}
  \end{array} \right.
\eeq
Let $t\in (\alpha_{q,x},\beta_{q,x})$. With respect to base-3 expansion $m=k_{q-1}3^0+\dots+k_{q-1-j}3^j+\dots +k_03^{q-1}$, $k_j\in\{0,1,2\}$. Then for all $j\in\{0,1,.., 3^q-1\}$ we have $m_1(r^jx)=m_1(r^jt)=\sqrt{3}a_{1,k_j}$ and $m_2(r^jx)=m_2(r^jt)=\sqrt{3}a_{2,k_j}$. To see this notice that
$r^j(x)\in [k_{j}/3, (k_{j}+1)/3]$ for all $0\leq j\leq q-1$. (indeed, for $j=0$ apply the inequalities

$$\frac{k_{0}}{3}=\frac{k_03^{q-1}}{3^q}\leq \frac{m}{3^q} < \frac{1+m}{3^q}\leq \frac{  1+\sum_{j=0}^{q-2}2\cdot3^j +k_{0}3^{q-1} }{3^q }=\frac{1+k_0}{3}$$
then $x\in [k_0/3, (k_0+1)/3]$ and $\frac{m-k_03^{q-1}}{3^{q-1}} < r(x)=3x-k_0 < \frac{m-k_03^{q-1}+1}{3^{q-1}}$. One can continue with same lower- upper inequalities to get $r(x)\in [k_1/3, (k_1+1)/3]$, and so on).\\
From (\ref{di}) we obtain
$$D_{3^q}(x,t)=\prod_{j=0}^{q-1}[1+3|a_{1,k_j}|^2+ 3|a_{2,k_j}|^2]=\prod_{j=0}^{q-1}3[(\frac{1}{\sqrt{3}})^2+|a_{1,k_j}|^2+|a_{2,k_j}|^2]=\prod_{j=0}^{q-1}3=3^q$$
In the second product above the norm of the $k_j$'th column of matrix $A$ appears. Because $A$ is unitary this norm equals to $1$.

Assume now $t\notin [\alpha_{q,x},\beta_{q,x}]$. It follows that there exists a $j\in \{0,1,..., q-1\}$ such that $r^j(t)\notin [k_{j}/3, (k_{j}+1)/3]$. Then a factor in $D_{3^q}(x,t)$ from (\ref{di}) must be of the form
$1+3a_{1,k_j}\overline{a_{1,k_{j'}}} +3a_{2,k_j}\overline{a_{2,k_{j'}}}$ with $k_j\neq k_{j'}$. Rewriting it as
$3[ (1/\sqrt{3})^2+a_{1,k_j}\overline{a_{1,k_{j'}}} +a_{2,k_j}\overline{a_{2,k_{j'}}}]$ we recognize the inner product between the $k_j$'th and $k_{j'}$'th  columns of $A$. This last inner product of course vanishes as  $k_j\neq k_{j'}$ and $A$ is unitary. In conclusion (\ref{dixt}) holds. Next we estimate $\abs{f(x)-S_{3^q}(x)}$. For $x\in [0,1]$ we have

\beq\label{conv}
\abs{f(x)-S_{3^q}(x)}=\abs{ f(x)-\int_0^1f(t)D_{3^q}(x,t)dt }=\abs{ f(x)-3^q\int_{\alpha_{q,x}}^{\beta_{q,x} }f(t)dt }
\eeq
The last equality follows from (\ref{dixt}). Now when $q\rightarrow\infty$ the last term above converges to $0$ when $x$ is Lebesgue point for $f$, because $\beta_{q,x}-\alpha_{q,x}=1/3^q$. Thus for $f\in L^1[0,1]$ the convergence holds pointwise a.e.

\end{proof}

The corollary below was shown by Walsh for $N=2$. Because the Rademacher functions values are "jump-averaged" at points of discontinuity the $2^q$-type of Walsh sums converge at dyadic points. In our case this would mean that if $x=a$ is a $N$-adic rational then $S_{N^q}(a)=\sum_{n=0}^{N^q-1}\ip{f}{W_{n,A} }W_{n,A}(a)$ converges to $\frac{f(a+)+f(a-)}{2}$. We did not average out the discontinuities of the Rademacher-like functions $m_i(r^jx)$ above, and we will not emphasize  here the convergence of $S_{3^q}$ at $N$-adic rationals. Actually, what happens at a finite-jump discontinuity can be analyzed with the aid of the corollary below applied to $f_1$, and $f_2$ where

$
f_1(x)=\left\{
\begin{array}{l l}
f(x), & \text{ if } x<a\\
f(a-), & \text {if } x\geq a
\end{array}\right.
$
and
$
f_2(x)=\left\{
\begin{array}{l l}
f(x), & \text{ if }x>a,\\
f(a+), & \text{ if }x\leq a
\end{array}\right.
$.\\
Whether or not full generalized Walsh series (i.e. $lim_{k\rightarrow\infty}\sum_{n=0}^{k}\ip{f}{W_{n,A} }W_{n,A}$ ) converge to $f$ at a continuity point when $f$ is of  bounded variation seems to be quite a different matter from the classical case ( Theorem IV in \cite{Walsh}). Our Maple implementations of the generalized Walsh system show that the partial sums associated to a step function display a somehow erratic behavior ( Example \ref{e1} below and  Figure 1 and 2). We do not have yet an answer to this question and it might be possible that opposite to the classical Walsh system, the generalized Walsh partial sums associated to a bounded variation $f$ does not converge to $f$, even at continuity points.

\begin{corollary}\label{uni}If $f\in L^1[0,1]$  is continuous in a neighborhood of $x=a$ then the convergence in Theorem \ref{t1} is uniform inside an interval centered at $a$.

\end{corollary}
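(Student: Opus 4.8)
The plan is to read off from the proof of Theorem \ref{t1} that $S_{N^q}(x)$ is nothing but the average of $f$ over the $N$-adic block containing $x$, and then to upgrade pointwise convergence to uniform convergence by replacing the Lebesgue-point argument with uniform continuity. Recall that the general-$N$ form of (\ref{dixt}) and (\ref{conv}) gives
$$f(x)-S_{N^q}(x)=f(x)-N^q\int_{\alpha_{q,x}}^{\beta_{q,x}}f(t)\,dt=N^q\int_{\alpha_{q,x}}^{\beta_{q,x}}\bigl(f(x)-f(t)\bigr)\,dt,$$
where $[\alpha_{q,x},\beta_{q,x})$ is the unique $N$-adic interval of length $N^{-q}$ with $\alpha_{q,x}\le x<\beta_{q,x}$; the second equality uses $\beta_{q,x}-\alpha_{q,x}=N^{-q}$. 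To make the continuity hypothesis quantitative, suppose $f$ is continuous on an open interval $(a-\delta,a+\delta)$ and pick radii $\eta<\eta'<\delta$, so that $f$ is uniformly continuous on the compact subinterval $[a-\eta',a+\eta']$.

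The second step is to keep the shrinking block inside this window. For $x\in[a-\eta,a+\eta]$ the relations $\alpha_{q,x}\le x<\beta_{q,x}$ and $\beta_{q,x}-\alpha_{q,x}=N^{-q}$ force $x-N^{-q}<\alpha_{q,x}$ and $\beta_{q,x}\le x+N^{-q}$, so once $N^{-q}<\eta'-\eta$ the entire block $[\alpha_{q,x},\beta_{q,x}]$ is contained in $[a-\eta',a+\eta']$. Consequently both $x$ and every $t$ appearing in the integral lie in the region of uniform continuity and satisfy $\abs{x-t}\le N^{-q}$.

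For the final estimate, fix $\epsilon>0$ and let $\rho>0$ be a modulus of uniform continuity for $f$ on $[a-\eta',a+\eta']$, i.e. $\abs{f(s)-f(u)}<\epsilon$ whenever $s,u\in[a-\eta',a+\eta']$ and $\abs{s-u}<\rho$. Choosing $q_0$ with $N^{-q_0}<\min(\eta'-\eta,\rho)$, for every $q\ge q_0$ and every $x\in[a-\eta,a+\eta]$ we obtain
$$\abs{f(x)-S_{N^q}(x)}\le N^q\int_{\alpha_{q,x}}^{\beta_{q,x}}\abs{f(x)-f(t)}\,dt< N^q\cdot\epsilon\cdot N^{-q}=\epsilon.$$
Since $q_0$ depends only on $\epsilon$ and not on $x$, this is uniform convergence on the interval $[a-\eta,a+\eta]$ centered at $a$. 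I expect no real difficulty beyond the geometric bookkeeping of the second step: the only thing to guard against is the $N$-adic block straying outside the neighborhood of continuity, which is precisely what the intermediate radius $\eta'$ prevents. The rest is just the block-average identity combined with uniform continuity.
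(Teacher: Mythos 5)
Your proposal is correct and follows essentially the same route as the paper's proof: both rely on the identity $S_{N^q}(x)=N^q\int_{\alpha_{q,x}}^{\beta_{q,x}}f(t)\,dt$ from (\ref{conv}), uniform continuity of $f$ on a compact window around $a$, and choosing $q$ large enough (independently of $x$) so that the shrinking $N$-adic block stays inside that window. The only difference is bookkeeping: the paper aligns its window to $N$-adic intervals $[k/N^{q_0},(k+3)/N^{q_0}]$ and proves uniformity on the middle subinterval, while you use a generic intermediate radius $\eta<\eta'$ to the same effect.
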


\begin{proof}
Let $[c,d]$ be an interval around $x=a$ on which $f$ is uniformly continuous such that $[c,d]=[k/N^{q_0},(k+3)/N^{q_0}]$, and $a\in [(k+1)/N^{q_0}, (k+2)/N^{q_0}]$. We show that $S_{N^q}$  converges uniformly to $f$ on the $N$-adic sub-interval $[(k+1)/N^{q_0}, (k+2)/N^{q_0}]$. For $\epsilon>0$ there exists $\delta_{\epsilon}>0$ such that $\abs{f(t)-f(t')}<\epsilon$ whenever $t,t'\in [c,d]$ and $\abs{t-t'}<\delta_{\epsilon}$. For $q_{\epsilon}\in\mathbb{N}$ large enough we have $1/N^q<\text{min}\{\delta_{\epsilon},1/N^{q_0}\}$ for all $q\geq q_{\epsilon}$. Then, with the notations in the proof of Theorem \ref{t1}, the interval $[\alpha_{q,t},\beta_{q,t}]$ is contained in the interval $[c,d]$ for any $t\in [(k+1)/N^{q_0}, (k+2)/N^{q_0}]$, and $q\geq q_\epsilon$. Using (\ref{conv}) we have
$$
\abs{S_{N^q}(t)-f(t)}=N^q\abs{\int_{\alpha_{q,t}}^{\beta_{q,t}}[f(\tau)-f(t)]d\tau}\leq\epsilon
$$
\end{proof}
Next corollary is easy to prove with the aid of  (\ref{dixt}) and (\ref{conv}), and can be used for data encoding/encrypting.
\begin{corollary}\label{c1} Assume $f:[0,1]\rightarrow\C$ is constant on the interval $I:=[i/N^q, (i+1)/N^q)$ for some $i\in\{0,1,..,N^q-1\}$, and $A$ is a unitary $N\times N$ matrix with constant $1/\sqrt{N}$ first row. Then for all $x\in I$ :
\beq\label{const}
f(x)=\sum_{n=0}^{N^q-1}\ip{f}{W_{n,A} }W_{n,A}(x)
\eeq
\end{corollary}

\begin{remark}Given positive integers $N\geq 2$ and $q$, and a matrix $A\in\mathcal{M}_{N\times N}(\mathbb{C})$ as above, one can define the discrete generalized Walsh transform $DTW_A:\mathbb{C}^{N^q}\rightarrow \mathbb{C}^{N^q}$ as follows
\beq\label{dwt}
DWT_A(v):=\left[\sum_{j=0}^{N^q-1}v_jW_{i,A}\left(\frac{2j+1}{2N^q}\right)\right]_{i=0}^{N^q-1}
\eeq
where $v=[v_j]_{j=0}^{N^q-1}$. Relation (\ref{dwt}) represents the sequence $[\ip{f}{W_{i,A}}]_{i=0,...,N^q-1}$ where $f$ is the function constant $v_i$ on each interval $[\frac{i}{N^q}, \frac{i+1}{N^q})$. The integration in each inner product translates into a finite sum because for $0\leq i \leq N^q-1$ the Walsh function $W_{i,A}$ is constant on intervals $(\frac{j}{N^q},\frac{j+1}{N^q})$, $\forall j=1,..., N^q-1$. Hence $\ip{f}{W_{i,A}}=\sum_{j=0}^{N^q-1}v_jW_{i,A}(t_j)$ where for $t_j$ we picked the midpoint of the interval $(\frac{j}{N^q},\frac{j+1}{N^q})$. Now with $x=\frac{k}{N^q}$, $0\leq k<N^q$ substituted in (\ref{const}) we obtain $v_k=\sum_{i=0}^{N^q-1}[DTW_A(v)]_iW_{i,A}\left(\frac{k}{N^q}\right)$ $\forall 0\leq k<N^q$,
i.e. $DWT_A$ is invertible.

\end{remark}

\begin{example}\label{e1}Consider the  step function  $$f(x)=\left\{
\begin{array}{l l}
0, & x\in [0,1/16)\cup [1/8, 3/16)\cup [1/4, 1/2)\\
1, & x\in [1/16,1/8)\cup [3/16, 1/4)\cup [1/2,1]
\end{array}\right.
$$
We have implemented the generalized Walsh system associated to the unitary matrix
$$A=
 \begin{pmatrix}
  \frac{1}{\sqrt{3}} &  \frac{1}{\sqrt{3}} &  \frac{1}{\sqrt{3}} \\
  \frac{\sqrt{2}}{2}& 0 &  -\frac{\sqrt{2}}{2}\\
 -\frac{\sqrt{6}}{6}  &  \frac{\sqrt{6}}{3} &  -\frac{\sqrt{6}}{6}
 \end{pmatrix}$$
By Theorem \ref{t1} the partial sums $\sum_{n=0}^{k}\ip{f}{W_{n,A} }W_{n,A}(x)$ converge to $f(x)$ for $k=N^q-1$. This is clearly the behaviour pictured in Figure 1 and 2 for $k=27$ and $k=81$ (notice that Corollary \ref{c1} is not applicable here and therefore the graphs for the partial sums having $3^q$ terms do not coincide with $f$'s, which is piecewise constant on a subdivision of $[0,1]$ coarser than one with triadic points). However even for a high number of terms  ( $k=300$ )  it seems  that the partial sums do not settle at $f(x)$.  Notice also that in our example $f$ has bounded variation.
\end{example}

\begin{figure}
 \includegraphics[width=1.5in]{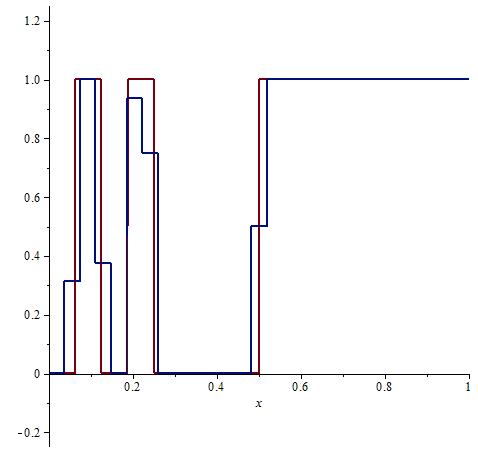}
   \includegraphics[width=1.5in]{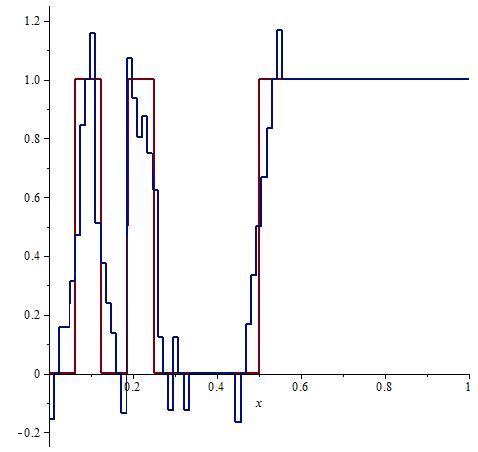}
    \includegraphics[width=1.5in]{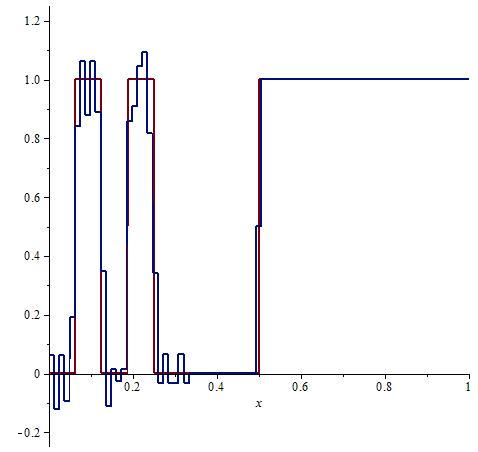}
    \includegraphics[width=1.5in]{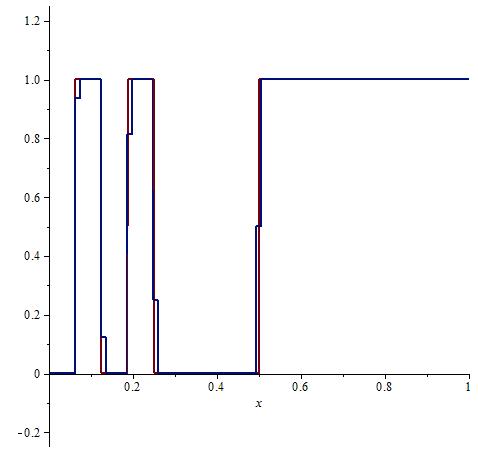}\\
\caption{ Graph of $f$ and its generalized Walsh sums with 27, 36, 60, and 81 terms}
\end{figure}

\begin{figure}
    \includegraphics[width=1.5in]{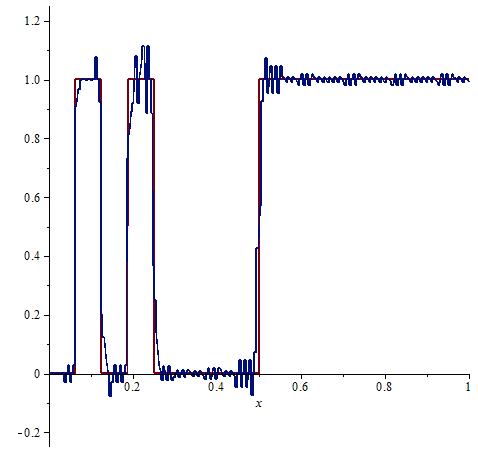}
    \includegraphics[width=1.5in]{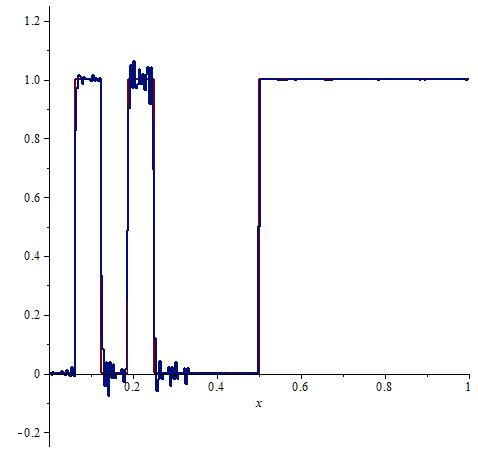}
\includegraphics[width=1.5in]{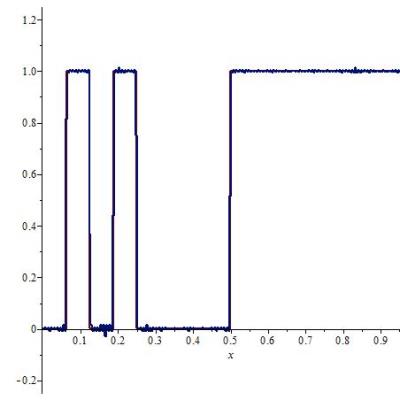}
 \includegraphics[width=1.5in]{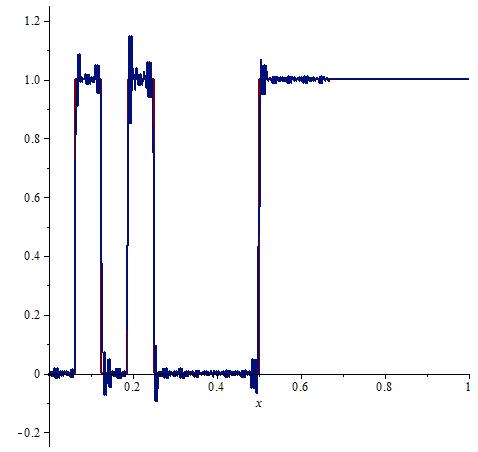}\\
\caption{Graph of $f$ and its generalized Walsh sums with 100, 200, 241, and 300 terms}
\end{figure}

\par Next we show that similarly to the classic Walsh functions the generalized Walsh expansions also can be interpreted as conditional expectations and Doob martingales. This enables us to conclude the convergence of the Walsh series in $L^p[0,1]$ with $1<p<\infty$.
\par Let $\mathcal{F}_q$ be the $\sigma$-algebra generated by the intervals $[j/N^q, (j+1)/N^q)$, $j=0,..., N^q-1$. Obviously $\mathcal{F}_1\subset\mathcal{F}_2\subset\dots\subset\mathcal{F}_q\subset\mathcal{F}_{q+1}\dots\subset\mathcal{B}$ where $\mathcal{B}$ is the Borel $\sigma$-algebra on $[0,1)$. Given a unitary matrix $A$ as before and a function $f$ we will denote by $S_{k}(f)$ the generalized Walsh series
$\sum_{n=0}^{k}\ip{f}{W_{n,A} }W_{n,A}$.

\begin{theorem}\label{mart} For $f\in L^1[0,1]$ the sequence $(S_{N^q}(f))_{q=1}^{\infty}$ is a martingale i.e.
\beq\label{exp}
\mathbb{E}[f|\mathcal{F}_q]=S_{N^q}(f)\text{, }\forall q\in\N
\eeq
\beq\label{mar}
\mathbb{E}[S_{N^{q+1}}(f)|\mathcal{F}_q]=S_{N^q}(f)\text{, }\forall q\in\N
\eeq
\end{theorem}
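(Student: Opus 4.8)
The plan is to reduce everything to the explicit integral representation of the partial sums that is already produced inside the proof of Theorem~\ref{t1}, and then to read off the martingale property from the tower rule for conditional expectations. The crucial input is the collapse of the Dirichlet kernel: combining (\ref{dixt}) with $S_{N^q}(x)=\int_0^1 f(t)D_{N^q}(x,t)\,dt$ gives, exactly as in (\ref{conv}),
\[
S_{N^q}(f)(x)=N^q\int_{\alpha_{q,x}}^{\beta_{q,x}}f(t)\,dt,
\]
where $[\alpha_{q,x},\beta_{q,x}]=[m/N^q,(m+1)/N^q]$ is the $N$-adic interval of length $1/N^q$ containing $x$. Hence $S_{N^q}(f)$ is constant on each such interval and equals the average of $f$ over it.

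To prove (\ref{exp}) I would recall that $\mathcal{F}_q$ is a purely atomic $\sigma$-algebra whose atoms are precisely the intervals $I_j=[j/N^q,(j+1)/N^q)$. For such a $\sigma$-algebra the conditional expectation has the explicit form that on each atom $I_j$, $\mathbb{E}[f|\mathcal{F}_q]$ takes the constant value $\frac{1}{|I_j|}\int_{I_j}f\,d\mu=N^q\int_{I_j}f(t)\,dt$. Comparing this with the displayed formula for $S_{N^q}(f)$ shows that the two $\mathcal{F}_q$-measurable functions agree on every atom, hence everywhere, which is (\ref{exp}). If one prefers to argue from the defining property directly, it suffices to check that $S_{N^q}(f)$ is $\mathcal{F}_q$-measurable (clear from the displayed formula) and that $\int_{I_j}S_{N^q}(f)\,d\mu=\int_{I_j}f\,d\mu$ for each $j$, which is immediate from the same formula.

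The martingale identity (\ref{mar}) then follows formally from (\ref{exp}). Since the filtration is nested, $\mathcal{F}_q\subset\mathcal{F}_{q+1}$ (as noted just before the theorem), the tower property of conditional expectation gives
\[
\mathbb{E}[S_{N^{q+1}}(f)|\mathcal{F}_q]
=\mathbb{E}\big[\mathbb{E}[f|\mathcal{F}_{q+1}]\,\big|\,\mathcal{F}_q\big]
=\mathbb{E}[f|\mathcal{F}_q]
=S_{N^q}(f),
\]
where the first and last equalities are (\ref{exp}) applied at levels $q+1$ and $q$ respectively.

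I do not expect a genuine obstacle here: once the Dirichlet kernel has collapsed the partial sum to a local average, identifying $S_{N^q}(f)$ with $\mathbb{E}[f|\mathcal{F}_q]$ is essentially a matter of recognition, and the martingale property is then automatic. The only step needing a little care is justifying the explicit atomic formula for the conditional expectation (equivalently, the averaging identity $\int_{I_j}S_{N^q}(f)=\int_{I_j}f$), which for the finite atomic $\sigma$-algebra $\mathcal{F}_q$ is standard. A small bookkeeping point is that the statement is for general $N$, so one invokes the $N$-adic version of (\ref{dixt}) flagged in the proof of Theorem~\ref{t1} rather than the written $N=3$ instance.
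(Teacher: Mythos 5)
Your proposal is correct and follows essentially the same route as the paper: both identify $S_{N^q}(f)$ as the local average $N^q\int_{m/N^q}^{(m+1)/N^q}f(t)\,dt$ via the Dirichlet kernel collapse from Theorem~\ref{t1}, verify the averaging identity $\int_{I_j}S_{N^q}(f)=\int_{I_j}f$ on the atoms of $\mathcal{F}_q$ to get (\ref{exp}), and obtain (\ref{mar}) from the tower property. Your write-up is in fact slightly more complete, since the paper leaves the tower-property deduction of (\ref{mar}) implicit.
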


\begin{proof}

Let $x\in [0,1)$ and $m$ the unique number in $\{0,\dots,N^q-1\}$ such that $x\in [m/N^q, (m+1)/N^q)$. As in the proof of Theorem \ref{t1} we have $$S_{N^q}(f)(x)=N^q\int_{m/N^q}^{(m+1)/N^q}f(t)dt$$
Thus $S_{N^q}(f)$ is a piece-wise constant function, constant on each interval $[j/N^q, (j+1)/N^q)$, equal to the average of $f$ on that interval. Then one can see that, for $j=0,\dots, N^q-1$:
$$ \int_{0}^1f(t)\cdot\chi_{_{[\frac{j}{N^q}, \frac{j+1}{N^q})}}(t)=\int_0^1S_{N^q}(f)(t)\cdot\chi_{_{[\frac{j}{N^q}, \frac{j+1}{N^q})}}(t)dt$$
This proves (\ref{exp}). We get that $(S_{N^q}(f))_{q=1}^{\infty}$ is a martingale.

\end{proof}

\begin{lemma}\label{lem} The operator $f\rightarrow S_{N^q}(f)$ is bounded from $L^{p}[0,1]$ to $L^{p}[0,1]$ for all $1\leq p\leq\infty$.
\end{lemma}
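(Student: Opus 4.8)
The plan is to reduce the statement to the contractivity of conditional expectation, which the paper has already essentially set up. Recall from the proof of Theorem \ref{mart} that for $x\in[m/N^q,(m+1)/N^q)$ one has
\[
S_{N^q}(f)(x)=N^q\int_{m/N^q}^{(m+1)/N^q}f(t)\,dt,
\]
i.e. $S_{N^q}(f)=\mathbb{E}[f\mid\mathcal{F}_q]$ is exactly the conditional expectation onto $\mathcal{F}_q$, or equivalently the operator that replaces $f$ by its average on each $N$-adic interval $I_j:=[j/N^q,(j+1)/N^q)$. The asserted $L^p$-boundedness (indeed with norm $1$) is then the standard fact that conditional expectation is an $L^p$-contraction for every $1\le p\le\infty$; I would nonetheless record the short direct argument based on Jensen's inequality.

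First I would treat the range $1\le p<\infty$. On $I_j$ the function $S_{N^q}(f)$ is constant, equal to $c_j:=N^q\int_{I_j}f(t)\,dt$, which is the average of $f$ against the probability measure $N^q\,dt$ restricted to $I_j$. Applying Jensen's inequality to the convex map $s\mapsto\abs{s}^p$ and this normalized measure yields $\abs{c_j}^p\le N^q\int_{I_j}\abs{f(t)}^p\,dt$. Integrating $\abs{S_{N^q}(f)}^p$ over $[0,1)$ and summing over the partition, using $\abs{I_j}=1/N^q$, gives
\[
\int_0^1\abs{S_{N^q}(f)(x)}^p\,dx=\sum_{j=0}^{N^q-1}\frac{1}{N^q}\,\abs{c_j}^p\le\sum_{j=0}^{N^q-1}\int_{I_j}\abs{f(t)}^p\,dt=\int_0^1\abs{f(t)}^p\,dt,
\]
so $\norm{S_{N^q}(f)}_p\le\norm{f}_p$ and the operator norm is at most $1$.

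Finally, the endpoint $p=\infty$ falls outside the Jensen computation and I would handle it by a direct estimate: for each $j$,
\[
\abs{c_j}\le N^q\int_{I_j}\abs{f(t)}\,dt\le N^q\cdot\frac{1}{N^q}\,\norm{f}_\infty=\norm{f}_\infty,
\]
whence $\norm{S_{N^q}(f)}_\infty\le\norm{f}_\infty$. I do not expect a genuine obstacle here: the entire content is the contractivity of the averaging (conditional expectation) operator, and the only points requiring minor care are ensuring the Jensen step is applied against the correctly normalized probability measure $N^q\,dt$ on each subinterval, and treating $p=\infty$ separately rather than as a limiting case.
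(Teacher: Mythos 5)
Your proof is correct, but it takes a genuinely different route from the paper's. The paper proves only the two endpoint bounds $\norm{S_{N^q}(f)}_1\leq\norm{f}_1$ and $\norm{S_{N^q}(f)}_\infty\leq\norm{f}_\infty$ (both by the same direct averaging estimates you use), and then invokes the Riesz--Thorin interpolation theorem to cover the intermediate exponents $1<p<\infty$. You instead handle every $p$ with $1\le p<\infty$ in one stroke by applying Jensen's inequality to the convex map $s\mapsto\abs{s}^p$ against the normalized measure $N^q\,dt$ on each interval $I_j$, and treat $p=\infty$ separately. Your argument is more elementary and self-contained: it avoids importing interpolation theory (which, strictly speaking, is usually stated for complex $L^p$ spaces and is a heavier tool than the lemma warrants), and it exhibits the sharp operator norm bound $\norm{S_{N^q}}_{p\to p}\le 1$ at every exponent directly --- this is exactly the standard proof that conditional expectation is an $L^p$-contraction, which fits well with the martingale interpretation the paper develops in Theorem \ref{mart} and uses in Corollary \ref{pconv}. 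What the paper's route buys is brevity of exposition once the two endpoint inequalities are written down, and it sidesteps any convexity argument; but interpolation also yields the constant $1$ here, so neither approach gives a quantitatively stronger conclusion than the other. One small point of care in your write-up, which you handled correctly: Jensen's inequality needs $f$ integrable on each $I_j$, which holds since $L^p[0,1]\subset L^1[0,1]$ on the finite measure space $[0,1]$.
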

\begin{proof} We have
\beq\label{unu}
\norm{S_{N^q}(f)}_1\leq \norm{f}_1
\eeq
\beq\label{doi}
\norm{S_{N^q}(f)}_{\infty}\leq \norm{f}_{\infty}
\eeq
Indeed, because $S_{N^q}(f)$ is piece-wise constant (the average of $f$) on $N$-adic intervals we have $$\int_0^1\abs{S_{N^q}(f)(t)}dt=\sum_{j=0}^{N^q-1}\abs{\int_{j/N^q}^{(j+1)/N^q}f(t)}dt\leq \int_0^1\abs{f(t)}dt$$
This proves (\ref{unu}). Also (\ref{doi}) follows from:
$$\abs{S_{N^q}(f)(t)}\leq N^q \int_{j/N^q}^{(j+1)/N^q}\abs{f(t)}dt\leq \norm{f}_{\infty}$$
The two inequalities imply that the operator $f\rightarrow S_{N^q}(f)$ is bounded between $L^{\infty}[0,1]\rightarrow L^{\infty}[0,1]$ and
$L^1[0,1]\rightarrow L^1[0,1]$. Then, by the Riesz-Thorin interpolation theorem, the operator $S_{N^q}$ is bounded from $L^{p}[0,1]$ to $L^{p}[0,1]$ for all $1\leq p\leq\infty$.

\end{proof}

\begin{corollary}\label{pconv} Let $1\leq p\leq \infty$, and $f\in L^p[0,1]$. Then $S_{N^q}(f)\rightarrow f$ a.e. in $[0,1]$. For $1<p<\infty$ we have  $S_{N^q}(f)\rightarrow f$ in $L^p[0,1]$.
\end{corollary}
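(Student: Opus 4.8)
The plan is to prove the final statement, Corollary \ref{pconv}, by assembling the three preceding results: the pointwise convergence from Theorem \ref{t1}, the martingale structure from Theorem \ref{mart}, and the uniform $L^p$-boundedness from Lemma \ref{lem}. Since $[0,1]$ has finite measure, $L^p[0,1]\subset L^1[0,1]$ for every $1\le p\le\infty$, so Theorem \ref{t1} applies directly to give $S_{N^q}(f)\to f$ almost everywhere for any $f\in L^p[0,1]$. This disposes of the a.e.\ claim immediately, and the only real work is the $L^p$-norm convergence for $1<p<\infty$.

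For the norm convergence, first I would establish the result on a dense subclass and then use the uniform boundedness to pass to the limit. The natural dense subclass is the set of $N$-adic step functions, i.e.\ functions that are constant on each interval $[i/N^{q_0},(i+1)/N^{q_0})$ for some fixed $q_0$. By Corollary \ref{c1}, if $g$ is such a step function with resolution $q_0$, then $S_{N^q}(g)=g$ exactly for all $q\ge q_0$, so $\|S_{N^q}(g)-g\|_p=0$ eventually and convergence is trivial. Such step functions are dense in $L^p[0,1]$ for $1\le p<\infty$. Now for general $f\in L^p$ and $\epsilon>0$, choose a step function $g$ with $\|f-g\|_p<\epsilon$, and split
\[
\|S_{N^q}(f)-f\|_p \le \|S_{N^q}(f-g)\|_p + \|S_{N^q}(g)-g\|_p + \|g-f\|_p.
\]
By Lemma \ref{lem} the first term is at most $\|f-g\|_p<\epsilon$ (the operator norm is bounded by $1$), the middle term vanishes for $q\ge q_0$, and the third is $<\epsilon$. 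Hence $\limsup_q\|S_{N^q}(f)-f\|_p\le 2\epsilon$, and letting $\epsilon\to 0$ gives $S_{N^q}(f)\to f$ in $L^p$.

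I expect the main subtlety to be keeping the roles of the two claims separate and invoking exactly the right hypothesis for each: the a.e.\ statement needs only the $L^1$ inclusion plus Theorem \ref{t1} and holds across the full range $1\le p\le\infty$, whereas the norm statement genuinely requires $1<p<\infty$ (density of step functions fails in $L^\infty$, and $p>1$ is where Lemma \ref{lem} gives a bounded operator on a space where step functions are dense). Alternatively, one could note that $(S_{N^q}(f))_q=(\mathbb{E}[f\mid\mathcal{F}_q])_q$ by Theorem \ref{mart} is precisely a martingale that, since $\bigcup_q\mathcal{F}_q$ generates the Borel $\sigma$-algebra $\mathcal{B}$, converges in $L^p$ to $\mathbb{E}[f\mid\mathcal{B}]=f$ for $1<p<\infty$ by the standard $L^p$ martingale convergence theorem; this gives a quick second route to the norm statement. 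I would present the elementary density argument as the primary proof, since it relies only on results already established in the excerpt, and mention the martingale convergence theorem as a conceptual alternative.
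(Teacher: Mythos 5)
Your proof is correct, but your primary argument follows a genuinely different route from the paper's: what you offer as a ``conceptual alternative'' at the end is in fact the paper's actual proof. The paper combines Theorem \ref{mart} with Lemma \ref{lem} to get $\norm{\mathbb{E}[f|\mathcal{F}_q]}_p=\norm{S_{N^q}(f)}_p\leq\norm{f}_p$ for all $q$, and then invokes Doob's martingale convergence theorem to obtain both the a.e.\ statement and the $L^p$ statement in one stroke. Your primary route instead derives the a.e.\ claim directly from Theorem \ref{t1} (via $L^p[0,1]\subset L^1[0,1]$) and the norm convergence from a three-epsilon density argument: exactness of $S_{N^q}$ on $N$-adic step functions (Corollary \ref{c1}), density of such step functions in $L^p$ for $p<\infty$, and the uniform operator bound $\norm{S_{N^q}}\leq 1$ from Lemma \ref{lem}. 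Your version is more elementary and self-contained, using only facts already proved in the paper and no external probability machinery; it also makes explicit a point the paper glosses over, namely that the martingale route needs $\bigcup_q\mathcal{F}_q$ to generate $\mathcal{B}$ in order for the closed martingale $\mathbb{E}[f|\mathcal{F}_q]$ to converge to $f$ itself rather than to some coarser conditional expectation. The paper's version is shorter and situates the corollary inside standard martingale theory. One correction to your closing commentary: your density argument does not ``genuinely require'' $1<p<\infty$ --- it works verbatim at $p=1$, since $N$-adic step functions are dense in $L^1$ and Lemma \ref{lem} covers $p=1$, so your method in fact proves the stronger statement that $S_{N^q}(f)\to f$ in $L^1$ for every $f\in L^1[0,1]$; only $p=\infty$ is genuinely excluded.
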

\begin{proof}
Using Theorem \ref{mart} and Lemma \ref{lem} we have
$$\norm{\mathbb{E}[f|\mathcal{F}_q]}_p=\norm{S_{N^q}(f)}_p\leq\norm{f}_p\text{ for all }q\in\mathbb{N}\text{ and }p=1,\dots\infty$$
By Doob's martingale Convergence theorem we have $S_{N^q}(f)\rightarrow f$ a.e. in $[0,1]$ and $S_{N^q}(f)\rightarrow f$ in $L^p[0,1]$ for $1<p<1$.
\end{proof}

\section{An encryption protocol \'{a} la Diffie-Hellman}
\begin{remark}
Corollary \ref{c1} suggests the following encoding or encryption scheme: Given data recorded by a function $f$ which is piecewise constant on intervals of length $1/N^q$, compute the generalized Walsh coefficients $(\ip{f}{W_{n,A} })_{n=0}^{N^q-1}$, for a choice of unitary matrix $A$. One can generate unitary matrices $A\in\mathcal{M}_{3\times 3}(\R)$ with constant first row $1/\sqrt{3}$ by randomly choosing an entry $a\in [-\sqrt{2/3}, \sqrt{2/3}]$ in the second or third row and then solving for the remaining ones (we show how to implement such an algorithm using Maple software later in this section). The restriction $\abs{a}\leq \sqrt{2/3}$ comes from asking certain quadratic equations have solutions, and it can be easily observed by requiring the matrix be unitary.

One should hedge against brute force attacks to "guessing" the value $a$, which would act as secret key. For example if the range of $f$ is known (e.g. the alphabet letters are indexed from 1 to 26 and $f$ represents a message of length $3^q$) then one can estimate within a certain margin $|\tilde{a}-a|<\epsilon$ an approximate message
$$\tilde{f}(x)=\sum\ip{f}{W_{n,A}} W_{n, \tilde{A}}(x)$$
assuming that the finite sequence $(\ip{f}{W_{n,A}})_n$ representing $f$ has been intercepted, e.g. through an unsecure network.
Hence it may be safer to consider the process $f\rightarrow (\ip{f}{W_{n,A} })_{n=0}^{N^q-1}$  just an encoding step, which due to its complexity is suitable to further encryption (e.g. using classical  bit operations). For example one could add a perturbation $h(a, x)$ to the sequence encoding $f$, depending on the entry $a$ and other variables which may be part of the secret key.

%Nevertheless one can continue an encryption scheme based on one or both of the following steps: \\
%\\
%1) Increase  the dimension of $A$, and/or keep secret the dimension. Either way, with increasing the dimension there are more entries to be randomly chosen thus increasing the secret key-space (we may also consider working with complex numbers entries which adds more freedom in solving for $A$).\\
%\\
%2) Add a perturbation function $h(a, x)$, depending on the entry $a$ and some other variables which may or may not be part of the secret key. E.g. add a "violent" oscillation pattern of the form $h(t)\sim M\sin(1/t^{\alpha})$, where $M,\alpha >0$. Then $f$ is encrypted as the sequence:
%$$f\rightarrow (\ip{f}{W_{n,A} }+h(n,a))_{n=0}^{N^q-1}$$
%An attempt at decoding using a "guess" $\tilde{a}$ will require smaller margins of error $\epsilon$ because this time
%$$\tilde{f}(x)=\sum\ip{f}{W_{n,A}} W_{n, \tilde{A}}(x)+\sum [h(n,a)-h(n,\tilde{a})]W_{n,\tilde{A}}(x)\sim f(x)+E(x)$$
%where $E(x):=\sum [h(n,a)-h(n,\tilde{a})]W_{n,\tilde{A}}(x)$ is far from vanishing as $h$ produces large variations for values $a$ and $\tilde{a}$ at least %$\epsilon$ apart.

\end{remark}
Of course security can be increased by allowing complex entries in $A$ (even though the data to be represented is made of real numbers). We note here that a scheme as above pertains to the area of symmetric key cryptography, i.e. both sender and receiver have knowledge of the matrix $A$ which generates the Walsh system. Generating such unitary $A$ is easily done with the aid of mathematical software, and the scheme described above can also be thought of as {\it{one time pad encryption}}.
\par In what follows we will study the theoretical feasibility of a protocol that shares similarities with both Diffie-Hellman key-exchange protocol and public key cryptography (RSA), based on generalized Walsh systems. More precisely we ask whether communication between Alice and Bob without sharing of the matrices $A$ and $B$ is possible. Our results indicate that some information about $A$ or $B$  has to be shared prior to message transmission (this theoretical "weakness" will be discussed later in the section). Hence this protocol is not "pure" Diffie-Hellman; the information to be shared (a system of quadratic polynomial equations) may be considered as public key, which the sender possesses (as opposed to common public key cryptography protocols where the receiver makes his public key known to anyone). For theoretical details regarding RSA, Diffie-Helmann key exchange protocols, and other public key cryptosystems and algorithms we refer the reader to \cite{GG} and \cite{DK}.
\begin{remark}
We describe first a more general set up: let $H_1$ be a non empty set (the space of {\it{messages}}) and $H_2$ another set (the space of {\it{encrypted messages}}). Then one can set up communication through an unsecure channel (e.g. Alice wants to send Bob a secret message $v\in H_1$ and Eve is capable to intercept all communications) without prior contact provided Alice and Bob are each in possession of {\it{operators}} $A:H_1\rightarrow H_2$ and $B:H_1\rightarrow H_2$ such that $B^{-1}\circ A\circ B^{-1}\circ A=I_{H_1}$, where $I_{H_1}$ is the identity operator (one might consider a slightly different   approach e.g. require that both $A$ and $B$ are defined on $H_1$ and adjust the above identity, and/or that their inverses exists and are defined on a smaller subspace). One should take care of a few requirements: for example the computation of $B^{-1}$ should be reliable (and easy); also there should be plenty of  operators $A$ and $B$  Alice and Bob could choose from without revealing their choice to each other or anyone else. Ideally there should be infinitely many $A$'s each of which admits infinitely many (or a large number of) $B$'s with $B^{-1}AB^{-1}A=I_{H_1}$. Such a family of operators is public and any pair (Alice, Bob) using the protocol will freely choose a pair $(A,B)$ with which they can start communicating. The situation where both choose the same operator pertains to the realm of symmetric key cryptography and consists only of the first two steps below (i.e. not all four); moreover such an occurrence would be improbable if there are infinitely many pairs $A$ and $B$ as above (see also remark \ref{theoretical} below). If all these are satisfied then one can start the exchange as follows: \\
1) Alice to Bob: $w_1=A(v)\in H_2$\\
2) Bob to Alice: $w_2=B^{-1}A(v)\in H_1$\\
3) Alice to Bob: $w_3=AB^{-1}A(v)\in H_2$\\
4) Bob applies $B^{-1}$ to $w_3$.\\
The scheme is safe provided Eve cannot decipher $v$ even when she is in possession of $w_i$, $i=1,2,3$. In other words Eve should not be able to figure out neither $A$ nor $B$ (or their inverses). Of course it is assumed that Eve is only {\it{eavesdropping}} without other interaction in the process (such as impersonating Alice and/or Bob). The question is where to look for such operators? One could encode the data to be transmitted in a finite dimensional  vector $v$, hence naturally the operators $A$ and $B$ may bee thought of as matrices. In this case we would have to find a infinite or very large number of matrices $C$ such that $C^2=I$, and decompositions of such matrices $C=AB$. Computationally we would have to deal with finding inverses of some of the matrices involved, which is unreliable when dealing with matrices of relatively large dimension. There are other ideas to consider, e.g. when the operators arise from the faithful representation of a group $G$ on a Hilbert space (such as $L^2(\mathbf{R}^n)$, which encodes the signals or the messages). Both encryption and security are then a consequence of the intrinsic properties of the group (generators and relations) and its representation (for example acting by 'shifting' should be considered unsecure).
\end{remark}
\begin{example}\label{e2}
Let $A$ and $B$ two unitary $N\times N$ matrices having constant $1/\sqrt{N}$ first row. We allow greater generality by working with complex numbers entries in both $A$ and $B$. We will denote by $\mathcal{W}_A$ the generalized Walsh transform associated to matrix $A$ i.e. $\mathcal{W}_A: L^2[0,1]\rightarrow l^2(\mathbb{N})$, $\mathcal{W}_A(f)=\ip{f}{W_{n,A}}_{n\geq 0}$, and similarly for the Walsh transform of $B$. Where well-defined (e.g. for finite sequences) the inverse transform operates as follows : if  $(a_n)_{n\geq 0}\in\l^2(\mathbb{N})$ then  $\mathcal{W}^{-1}_A((a_n)_n)=\sum_na_nW_{n,A} $. Also note that we have kept the same value $N$ in both systems, thus the same map $r(x)=(Nx)\mbox{mod}1$ enters in the definition of both Walsh ONBs. Then for a given $f\in L^2([0,1])$ the requirement
\beq\label{com}
\mathcal{W}^{-1}_B\circ\mathcal{W}_A\circ\mathcal{W}^{-1}_B\circ\mathcal{W}_A(f)=f
\eeq
has the following interpretation:
\vspace{0.1in}
\\
$\bullet $ Alice encrypts "message" $f$ using her matrix $A$ as the sequence $\mathcal{W}_A(f)=\ip{f}{W_{n,A}}_{n\geq 0}$ which she sends to Bob.
\vspace{0.1in}
\\
$\bullet$ Using his own matrix $B$, Bob constructs a new message $\mathcal{W}^{-1}_B\circ\mathcal{W}_A(f)=\sum_n \ip{f}{W_{n,A}} W_{n, B}(x)$ which he sends back to Alice. Notice that Bob sends a "whole" function whereas Alice sends a sequence, however in practice both $f$ and $\mathcal{W}^{-1}_B\circ\mathcal{W}_A(f)$ are piece-wise constant thus easy to record as finite length sequences.\vspace{0.1in}
\\
$\bullet$ Alice sends the coefficients $\mathcal{W}_A\circ\mathcal{W}^{-1}_B\circ\mathcal{W}_A(f)$ back to Bob.
\vspace{0.1in}
\\
$\bullet$ Bob finally applies the $\mathcal{W}^{-1}_B$  transform to the previous sequence and recovers the original $f$.
\vspace{0.1in}
\\
 Of course one should not expect that the intertwining relation (\ref{com}) just holds for any pair of matrices $A$ and $B$. We are interested in finding plenty of cases when it does. We would actually like to have infinitely many such pairs $(A,B)$ and to make it impossible to detect $A$ given $B$ or vice versa.  We will work under the assumption that for a fixed positive integer $q$, the "message" function $f$ is real valued, piecewise constant on $N$-adic intervals as in Corollary \ref{c1}. We have:
\begin{align*}
\mathcal{W}^{-1}_B\circ\mathcal{W}_A\circ\mathcal{W}^{-1}_B\circ\mathcal{W}_A(f)&=\sum_{k=0}^{N^q-1}\left( \sum_{l=0}^{N^q-1}\ip{f}{W_{l,A}}\ip{W_{l,B}}{W_{k,A}} \right)W_{k,B}\\
&=\sum_{l=0}^{N^q-1}\ip{f}{W_{l,A}}\sum_{k=0}^{N^q-1}\ip{W_{l,B}}{W_{k,A}}W_{k,B}
\end{align*}
Next we assume the "commutation" relation $\ip{W_{l,B}}{W_{k,A}}=\ip{W_{l,A}}{W_{k,B}}$. Hence we can continue the last equality with
$$\mathcal{W}^{-1}_B\circ\mathcal{W}_A\circ\mathcal{W}^{-1}_B\circ\mathcal{W}_A(f)=\sum_{l=0}^{N^q-1}\ip{f}{W_{l,A}}\sum_{k=0}^{N^q-1}\ip{W_{l,A}}{W_{k,B}}W_{k,B}$$
Notice that all generalized Walsh functions $x\rightarrow W_{l,A}(x)$ are piecewise constant on $N$-adic intervals so that Corollary \ref{c1} can be appplied:
$$\sum_{k=0}^{N^q-1}\ip{W_{l,A}}{W_{k,B}}W_{k,B}(x)=W_{l,A}(x)$$
We therefore obtain
$$\mathcal{W}^{-1}_B\circ\mathcal{W}_A\circ\mathcal{W}^{-1}_B\circ\mathcal{W}_A(f)=\sum_{l=0}^{N^q-1}\ip{f}{W_{l,A}}W_{l,A}=f$$
The last equality follows from Corollary \ref{c1} applied to $f$. We record these computations in the following
\end{example}
\begin{proposition}\label{p}
Let $q\in\mathbb{N}$, $N>1$ an integer. Then relation (\ref{com}) holds for any $f$ piecewise constant on each interval $[i/N^q, (i+1)/N^q]$, $i\in\{0,1,..., N^{q}-1\}$, provided
\beq\label{comm}
\ip{W_{l,B}}{W_{k,A}}=\ip{W_{l,A}}{W_{k,B}},\forall k,l=0,..N^q-1
\eeq
where $A$, and $B$ are unitary in $\mathcal{M}_{N\times N}(\mathbb{C})$, having constant $1/\sqrt{N}$ first row.
\end{proposition}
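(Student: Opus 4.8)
The plan is to establish (\ref{com}) by a direct expansion of the fourfold composite $\mathcal{W}^{-1}_B\circ\mathcal{W}_A\circ\mathcal{W}^{-1}_B\circ\mathcal{W}_A$ from the definitions of the transforms, followed by a collapse of the resulting double sum using the hypothesis (\ref{comm}) together with two applications of Corollary \ref{c1}. A preliminary observation is that all the sums over the summation index may be truncated at $N^q-1$: since $f$ is piecewise constant on the intervals $[i/N^q,(i+1)/N^q)$, Corollary \ref{c1} gives $\ip{f}{W_{n,A}}=0$ for $n\geq N^q$, and the same reasoning applies to each $W_{l,A}$ expanded in the $B$-system, so every inverse transform below acts on a finite sequence and is well defined.

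First I would compute $\mathcal{W}^{-1}_B\circ\mathcal{W}_A(f)=\sum_{l=0}^{N^q-1}\ip{f}{W_{l,A}}W_{l,B}$, then read off the $k$-th coordinate of its $A$-transform as $\sum_{l}\ip{f}{W_{l,A}}\ip{W_{l,B}}{W_{k,A}}$, and finally apply $\mathcal{W}^{-1}_B$ and interchange the two finite sums to reach
\[
\mathcal{W}^{-1}_B\circ\mathcal{W}_A\circ\mathcal{W}^{-1}_B\circ\mathcal{W}_A(f)=\sum_{l=0}^{N^q-1}\ip{f}{W_{l,A}}\sum_{k=0}^{N^q-1}\ip{W_{l,B}}{W_{k,A}}\,W_{k,B}.
\]
At this point the commutation hypothesis (\ref{comm}) is invoked to replace $\ip{W_{l,B}}{W_{k,A}}$ by $\ip{W_{l,A}}{W_{k,B}}$, which turns the inner sum into the genuine $B$-Walsh expansion of the single function $W_{l,A}$.

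The decisive step is then to recognize that $\sum_{k=0}^{N^q-1}\ip{W_{l,A}}{W_{k,B}}W_{k,B}=W_{l,A}$ by Corollary \ref{c1}. To license this I would make explicit that each $W_{l,A}$ with $0\leq l\leq N^q-1$ is constant on every interval $[i/N^q,(i+1)/N^q)$: the base-$N$ expansion of such $l$ involves only digits at positions $0,\dots,q-1$, so by (\ref{gw}) the function $W_{l,A}$ is a product of factors $m_{i_j}(r^jx)$ with $j\leq q-1$, and the finest of these is constant on intervals of length $1/N^q$. With this in hand the outer sum becomes $\sum_{l=0}^{N^q-1}\ip{f}{W_{l,A}}W_{l,A}$, and a second application of Corollary \ref{c1}, now to $f$ itself, yields $f$, which is exactly (\ref{com}).

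I anticipate that the only genuine subtlety is the justification that Corollary \ref{c1} applies to the functions $W_{l,A}$; the remainder is a bookkeeping rearrangement of finite sums. The role of (\ref{comm}) is precisely to convert the mixed inner products $\ip{W_{l,B}}{W_{k,A}}$, which on their own reconstruct nothing, into the coefficients of an honest $B$-expansion to which Corollary \ref{c1} can be applied.
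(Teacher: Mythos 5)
Your proposal is correct and follows essentially the same route as the paper's own argument (carried out in Example \ref{e2}): expand the fourfold composite into a double sum, interchange the finite sums, use (\ref{comm}) to turn the inner sum into a genuine $B$-expansion, and apply Corollary \ref{c1} twice, once to each $W_{l,A}$ and once to $f$ itself. The two points you make explicit --- that the coefficient sequences truncate at $N^q-1$, and that each $W_{l,A}$ with $l<N^q$ is constant on the $N$-adic intervals of length $1/N^q$ --- are left implicit in the paper, so your write-up is if anything slightly more complete.
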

We would like to find a condition that is easier to implement in an algorithm than the above (\ref{comm}). Actually at this point it is not obvious that there should exist unitary matrices $A$ and $B$ satisfying (\ref{comm}). Note that the inner product in (\ref{comm}) is taken in the Hilbert space $L^2[0,1]$  whereas the one below in (\ref{coma}) is the usual $\mathbb{C}^N$ inner product.
\begin{theorem}\label{t2}Let $A=[a_{ij}]_{i=0,N-1}^{j=0,N-1}$, and $B=[b_{ij}]_{i=0,N-1}^{j=0,N-1}$ be unitary matrices in $\mathcal{M}_{N\times N}(\mathbb{C})$ with constant $1/\sqrt{N}$ first row. Using notation $\mbox{row }_{i,A}$ for the $i^{\mbox{th}}$ row in matrix $A$, condition (\ref{comm}) is equivalent to
\beq\label{coma}
\ip{\mbox{row }_{l,B}}{\mbox{row }_{ k,A}}=\ip{\mbox{row }_{ l,A}}{\mbox{row }_{ k,B}}\mbox{ for all }k, l\mbox{ in }\{1,2,3,...,N\}
\eeq

\end{theorem}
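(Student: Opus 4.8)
The plan is to pass from the $L^2[0,1]$ identity (\ref{comm}) to the $\C^N$ identity (\ref{coma}) through a \emph{product formula} that factors the inner product of two generalized Walsh functions along the base-$N$ digits of their indices. Writing $l=\sum_{j\ge 0}l_jN^j$ and $k=\sum_{j\ge 0}k_jN^j$ with $l_j,k_j\in\{0,\dots,N-1\}$ (consistent with the $0$-indexing of the matrix entries), the formula I aim to establish is
\beq\label{prodform}
\ip{W_{l,A}}{W_{k,B}}=\prod_{j\ge 0}\ip{\mbox{row }_{l_j,A}}{\mbox{row }_{k_j,B}},
\eeq
where the right-hand inner products are the ordinary ones in $\C^N$ and only finitely many factors differ from $1$. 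To prove (\ref{prodform}) I would induct on the number of nonzero digits, exploiting the self-similarity of $r$. Splitting $[0,1)$ into the blocks $[s/N,(s+1)/N)$, on each of which $m_{l_0}^A$ and $m_{k_0}^B$ are the constants $\sqrt N a_{l_0,s}$ and $\sqrt N b_{k_0,s}$, the substitution $y=r(x)=Nx-s$ carries the block onto $[0,1)$ and rewrites the tail $\prod_{j\ge1}m_{l_j}^A(r^jx)\cj{m_{k_j}^B(r^jx)}$ as $W_{\lfloor l/N\rfloor,A}(y)\cj{W_{\lfloor k/N\rfloor,B}(y)}$, a function of $y$ alone. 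Matching the Jacobian $1/N$ against the factor $N=\sqrt N\cdot\sqrt N$ and summing over $s$ produces the one-step recursion
\beq\label{recform}
\ip{W_{l,A}}{W_{k,B}}=\ip{\mbox{row }_{l_0,A}}{\mbox{row }_{k_0,B}}\cdot\ip{W_{\lfloor l/N\rfloor,A}}{W_{\lfloor k/N\rfloor,B}},
\eeq
which iterates to (\ref{prodform}).

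Granting (\ref{prodform}), the implication (\ref{coma})$\Rightarrow$(\ref{comm}) is immediate: hypothesis (\ref{coma}) interchanges $A$ and $B$ in each single row inner product without changing its value, so
\beq
\ip{W_{l,B}}{W_{k,A}}=\prod_{j\ge 0}\ip{\mbox{row }_{l_j,B}}{\mbox{row }_{k_j,A}}=\prod_{j\ge 0}\ip{\mbox{row }_{l_j,A}}{\mbox{row }_{k_j,B}}=\ip{W_{l,A}}{W_{k,B}}
\eeq
factor by factor. For the converse (\ref{comm})$\Rightarrow$(\ref{coma}) I would restrict (\ref{comm}) to single-digit indices $l,k\in\{0,\dots,N-1\}$, which are available since $q\ge 1$ forces $N-1\le N^q-1$. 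Here every digit past the zeroth vanishes, and the constant first-row hypothesis is exactly what trivializes the remaining factors: $\ip{\mbox{row }_{0,A}}{\mbox{row }_{0,B}}=\sum_{s}\tfrac1{\sqrt N}\cdot\tfrac1{\sqrt N}=1$. Thus (\ref{prodform}) collapses to $\ip{W_{l,A}}{W_{k,B}}=\ip{\mbox{row }_{l,A}}{\mbox{row }_{k,B}}$, and likewise with $A,B$ swapped, so (\ref{comm}) read on these indices is precisely (\ref{coma}).

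I expect the factorization (\ref{prodform})--(\ref{recform}) to be the only real work. The delicate point is verifying that, after the change of variables, the tail depends on $y$ but not on the block index $s$, so that the sum over $s$ detaches as a single $\C^N$ row inner product; this is where the iterated-function-system structure of $r$ is used. Once this self-similar decoupling is in hand, both directions are one-line consequences, and the constant-first-row assumption enters exactly once, to kill the trailing-digit factors in the reduction to single digits.
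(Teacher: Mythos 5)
Your proof is correct, and its engine is the same phenomenon the paper exploits --- the inner product of two generalized Walsh functions in $L^2[0,1]$ factors digit-by-digit into $\C^N$ row inner products --- but you organize it around a different key lemma, and the organization is genuinely cleaner. The paper handles the direction (\ref{comm})$\Rightarrow$(\ref{coma}) exactly as you do, by restricting to single-digit indices; for the converse, however, it expands both sides of the product identity (\ref{de}) term by term using the measure computation (\ref{mes}), which produces row inner products over the matched digit positions together with trailing factors that are row sums such as $b_{j_y,0}+b_{j_y,1}+b_{j_y,2}$, and it must then invoke both (\ref{coma}) and the observation that corresponding row sums of $A$ and $B$ coincide (each equals $0$ or $N/\sqrt{N}$, by orthogonality against the constant first row), after a preliminary case analysis ($k=0$, $l=0$, $k=l$, $k$ or $l$ equal to $1$). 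Your recursion, obtained from the substitution $y=Nx-s$ on each block $[s/N,(s+1)/N)$, iterates into a single product formula in which zero digits are retained as honest factors; since the $0$-th row of $A$ is constant, a trailing factor is
$$\ip{\mbox{row }_{0,A}}{\mbox{row }_{k_j,B}}=\frac{1}{\sqrt{N}}\sum_{s=0}^{N-1}\cj{b_{k_j,s}},$$
i.e.\ exactly $1/\sqrt{N}$ times the paper's row sum. So the unequal-digit-length bookkeeping and the case analysis are absorbed automatically, both implications drop out of one identity, and nothing changes over $\C$ (the paper restricts to real entries to lighten notation). What the paper's route buys in exchange is a fully explicit, elementary expansion in which one sees precisely which orthogonality relation is consumed at each step. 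The one step you should write out in full is the decoupling you flag yourself: after the substitution, the tail equals $W_{\lfloor l/N\rfloor,A}(y)\,\cj{W_{\lfloor k/N\rfloor,B}(y)}$, the same function of $y$ on every block, because $r^jx=r^{j-1}y$ there for all $j\geq 1$; this is exactly the paper's self-similarity input (the content of (\ref{mes})), localized to a single line, and once it is stated your two directions are indeed immediate.
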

\begin{proof} The notation being already a bit crowded we will proceed under the assumption that $A$'s and $B$'s entries are real numbers. This will only affect not writing with conjugates when applying inner products (which will now be symmetric). The reader should easily retrace the proof and supply the complex conjugates where needed.
\par  The implication (\ref{comm})$\Rightarrow$(\ref{coma}) follows faster. First we will denote by $m_{i}^A$ the functions which define the Walsh system associated with unitary matrix $A$, and similarly for $B$.  Notice that the first rows of both $A$ and $B$ are written ${(a_{0,i})}_{i=0}^{N-1}$, and ${(b_{0,j})}_{j=0}^{N-1}$ (as in Introduction). Now when $0\leq k,l\leq N-1$ their base $N$ expansion are simply $k=k\cdot N^0$, and $l=l\cdot N^0$, hence $W_{i,C}(x)=m_i^C(x)$ for $i=k$, or $i=l$, and matrix $C=A$, or $C=B$. We have
$$\ip{W_{k,A}}{W_{l,B}}=\int_{0}^1m_k^A(x)m_l^B(x)=N\int_0^1\sum_{j=0}^{N-1}\sum_{i=0}^{N-1}a_{kj}b_{li}\chi_{I_j\cap I_i}(x)$$
where $I_j\cap I_i=[j/N, (j+1)/N]\cap [i/N, (i+1)/N]$. In conclusion
$$\ip{W_{k,A}}{W_{l,B}}=N\int_0^1\sum_{j=0}^{N-1}a_{kj}b_{lj}\chi_{I_j}(x)=\ip{\mbox{row }_{k+1,A}}{\mbox{row }_{ l+1,B}}$$
We thus obtain  (\ref{comm})$\Rightarrow$(\ref{coma}). We will show the converse with $N=3$, however the reader can easily replace the calculations for general $N$ as the pattern does not change much. When $k=0$ or $l=0$ or $k=l$ the relation (\ref{comm}) is true because of either orthogonality and $W_{0,A}=1=W_{0,B}$, or symmetry of the inner product over $\mathbb{R}^N$ . Also, when $k=1$ or $l=1$ (\ref{coma}) is true because of unitary requirements on $A$ and $B$. Hence the main assumption becomes
\beq\label{as}
\ip{\mbox{row }_{2,A}}{\mbox{row }_{ 3,B}}=\ip{\mbox{row }_{ 3,A}}{\mbox{row }_{ 2,B}}
\eeq
We want to deduce
 $$\int_0^1 m_{i_0}^A(x) m_{i_1}^A(rx)... m_{i_l}^A(r^lx) m_{j_0}^B(x) m_{j_1}^B(rx)... m_{j_p}^B(r^px)=$$
\beq\label{de}
= \int_0^1 m_{j_0}^A(x) m_{j_1}^A(rx)... m_{j_p}^A(r^px) m_{i_0}^B(x) m_{i_1}^B(rx)... m_{i_l}^B(r^lx)
\eeq
for all non negative integers $p$ and $l$, and for all $i_0, i_1,..., i_p$, $j_0, j_1,..., j_l$ in $\{0,1,2\}$. Following up on the discussion above, (\ref{de}) is true when ($l=0=p$) and ($i_0=0$ or $j_0=0$ or $i_0=j_0$). When $i_0\neq j_0$ and both non zero, these subscripts represent the $2^{nd}$ and $3^{rd}$ row of either $A$ or $B$, and (\ref{de}) follows from (\ref{as}). To better digest the proof of (\ref{de}) we will go through one more particular case, e.g. we will show
\beq\label{dep}
\int_0^1 m_{i_0}^A(x)m_{i_1}^A(rx)m_{j_0}^B(x)=\int_0^1 m_{j_0}^A(x)m_{i_0}^B(x)m_{i_1}^B(rx)
\eeq
Starting with the left-hand side we have
$$ \int_0^1 m_{i_0}^A(x)m_{i_1}^A(rx)m_{j_0}^B(x)=\sum_{k,l,t=0}^2a_{i_0,k}\chi_{[\frac{k}{3},\frac{k+1}{3}]}(x)\cdot a_{i_1,l}\chi_{[\frac{l}{3},\frac{l+1}{3}]}(rx)\cdot b_{j_0,t}\chi_{[\frac{t}{3},\frac{t+1}{3}]}(x)$$
Using notation $\lambda$ for the Lebesgue measure on $[0,1]$ we can continue with
\begin{align*}
 \int_0^1 m_{i_0}^A(x)m_{i_1}^A(rx)m_{j_0}^B(x)&=\sum_{k,l,t=0}^2a_{i_0,k}a_{i_1,l}b_{j_0,t}\cdot\lambda\left([\frac{k}{3},\frac{k+1}{3}]\cap r^{-1}([\frac{l}{3},\frac{l+1}{3}])\cap [\frac{t}{3},\frac{t+1}{3}]\right)\\
&=\sum_{k,l=0}^2a_{i_0,k}a_{i_1,l}b_{j_0,k}\cdot\lambda\left([\frac{k}{3},\frac{k+1}{3}]\cap r^{-1}([\frac{l}{3},\frac{l+1}{3}])\right)\\
&=\frac{1}{3^2}\sum_{k,l=0}^2a_{i_0,k}b_{j_0,k}a_{i_1,l}=\frac{1}{3^2}\sum_{k=0}^2a_{i_0,k}b_{j_0,k}\cdot\sum_{l=0}^2a_{i_1,l}
\end{align*}
In the calculations above we have used $\lambda\left([\frac{k}{3},\frac{k+1}{3}]\cap [\frac{l}{3},\frac{l+1}{3}]\right)=0$ whenever $k\neq l$, and \\
$\lambda\left([\frac{k}{3},\frac{k+1}{3}]\cap r^{-1}([\frac{l}{3},\frac{l+1}{3}])\right)=\frac{1}{9}$, for all $k,l\in\{0,1,2\}$ (this follows by inspecting the action of $r$ on $[0,1]$,  for example $rx\in [0,1/3]$ iff $x\in [0,1/9]\cup [1/3, 4/9]\cup [2/3, 7/9]$ etc). By applying  similar arguments  to the right-hand side of (\ref{dep}) we obtain
$$\int_0^1 m_{j_0}^A(x)m_{i_0}^B(x)m_{i_1}^B(rx)=\frac{1}{3^2}\sum_{k=0}^2a_{j_0,k}b_{i_0,k}\cdot\sum_{l=0}^2b_{i_1,l}$$
Because $A$, and $B$ are unitary with constant first row we have $\sum_{l=0}^2a_{i_1,l}=\sum_{l=0}^2b_{i_1,l}$ (= either 0 or $3/\sqrt{3}$, replaced by $N/\sqrt{N}$ in the general setting), and thus (\ref{dep}) follows due to (\ref{coma}) . Now to prove (\ref{de}) for any $l$ and $p$ we highlight the following property of $r$ which we mentioned in the particular case above. The reader can check it easily based on the observation that each set $r^{-1}[\frac{t}{3},\frac{t+1}{3}]$ contains precisely one component out of three of measure $1/9$ inside any interval $[\frac{k}{3},\frac{k+1}{3}]$, where $k,t\in \{0,1,2\}$. Hence, if $l\leq p$ are nonnnegative integers, and $t_0,t_1,...,t_l$,$q_0,q_1,...,q_p$ are $\{0,1,2\}$-digits, the Lebesgue measure of the set
\begin{align*}
\mathcal{S}_{t_0,t_1,...,t_l,q_0,q_1,...,q_l,...,q_p}:&=\left[\frac{t_0}{3},\frac{t_0+1}{3}\right]\cap r^{-1}\left[\frac{t_1}{3},\frac{t_1+1}{3}\right]\cap...\cap r^{-l+1}\left[\frac{t_l}{3},\frac{t_l+1}{3}\right]\cap \\
& \cap \left[\frac{q_0}{3},\frac{q_0+1}{3}\right]\cap r^{-1}\left[\frac{q_1}{3},\frac{q_1+1}{3}\right]\cap...\cap r^{-p+1}\left[\frac{q_p}{3},\frac{q_p+1}{3}\right]\\
\end{align*}
is obtained
\beq\label{mes}
\lambda(\mathcal{S}_{t_0,t_1,...,t_l,q_0,q_1,...,q_l,...,q_p})=\left\{
\begin{array}{l l}
\frac{1}{3^{p+1}} & \mbox{ if }t_0=q_0\mbox{ and }t_1=q_1\mbox{ and...and }t_l=q_l\\
\\
0 & \mbox{ otherwise}
\end{array}
\right.
\eeq
Without loss of generality assume $l\leq p$ and start with the left-hand side (LHS) of (\ref{de}). Replacing the $m$'s and integrating the characteristic functions we obtain
$$\mbox{LHS}=\sum_{\substack{t_0,t_1,...,t_l=0\\ q_0,q_1,...,q_p=0}}^2a_{i_0,t_0}a_{i_1,t_1}\dots a_{i_l,t_l}\cdot b_{j_0,q_0}b_{j_1,q_1}\dots b_{j_p,q_p}\cdot\lambda(\mathcal{S}_{t_0,t_1,...,t_l,q_0,q_1,...,q_l,...,q_p})$$
Using (\ref{mes}) we continue with
\begin{align*}
\mbox{LHS}&=\frac{1}{3^{p+1}}\sum_{t_0,t_1,...,t_l=0}^2a_{i_0,t_0}b_{j_0,t_0}\dots a_{i_l,t_l}b_{j_l,t_l}\cdot\sum_{q_{l+1},..., q_p=0}^2b_{j_{l+1},q_{l+1}}\dots b_{j_p,q_p} \\
&=\frac{1}{3^{p+1}}\prod_{x=0}^{l}\ip{row_{i_x,A}}{row_{j_x,B}}\cdot \prod_{y=l+1}^p(b_{j_y,0}+b_{j_y,1}+b_{j_y,2})
\end{align*}
Now due to (\ref{coma}) we may switch $A$ with $B$ in the first product above. As for the second product, notice the sum of the ${j_y}^{th}$ row of matrix $B$: each such sum is equal to the sum of the ${j_y}^{th}$ row of matrix $A$ (both being equal to either 0 or $3/\sqrt{3}$), according to perpendicularity requirements. Therefore
\begin{align*}
\mbox{LHS}&=\frac{1}{3^{p+1}}\prod_{x=0}^{l}\ip{row_{i_x,B}}{row_{j_x,A}}\cdot \prod_{y=l+1}^p(a_{j_y,0}+a_{j_y,1}+a_{j_y,2})\\
&=\frac{1}{3^{p+1}}\sum_{t_0,t_1,...,t_l=0}^2b_{i_0,t_0}a_{j_0,t_0}\dots b_{i_l,t_l}a_{j_l,t_l}\cdot\sum_{q_{l+1},..., q_p=0}^2a_{j_{l+1},q_{l+1}}\dots a_{j_p,q_p}
\end{align*}
The last term we have obtained is equal to the right-hand side RHS of (\ref{de}), as we can express it using (\ref{mes}) precisely in the same way we started with LHS. In conclusion (\ref{de}) follows from (\ref{coma}) and we are done.
\end{proof}

\begin{remark}\label{theoretical} We describe next the theoretical framework underlying a possible cryptographic protocol based on the ideas above.
\begin{enumerate}
\item To obtain a generalized Walsh ONB, Alice sets up the following equations:

\begin{align*}\label{aliceq}
  &1)\quad a_{1,j}=1/\sqrt{N}, \quad\forall j=1,...,N\\
&2)\quad \sum_{j=1}^N|a_{i,j}|^2=1, \quad\forall i=2,...,N\\
 &3)\quad \sum_{j=1}^Na_{i,j}=0,  \quad\forall i=2,...,N\\
  &4\quad) \sum_{k=1}^Na_{i,k}\overline{a_{j,k}}=0, \quad\forall 1<i<j\leq N
  \end{align*}

Discarding the first item we are left with $\frac{N^2-N}{2}+N-1$ equations with $N(N-1)$ unknowns $a_{i,j}$, $i=2,...,N, j=1,...,N$.
If $a_{i,j}\in\mathbb{R}$ then one obtains a system of $\frac{N^2-N}{2}+N-1$ polynomial (quadratic) equations with infinitely many solutions as long as $N\geq 3$. All Alice has to do is pick a few prescribed entries (with some care so as to maintain norm $1$ on the row the entry comes from) and solve for the remaining entries (see example below). Of course one can allow for complex unknowns $a_{ij}$ with non zero imaginary parts. In this case the system above can again be thought of as a system of polynomial equations with real value unknowns by splitting each equation into real and imaginary parts. Actually in this case the number of unknowns doubles (each $a_{i,j}$ contributes two more unknowns, real and imaginary) while the equations in item ii) above do not. Of course it was obvious that there are infinitely many unitary matrices but what we spelled out here was the precise requirements we need in order to implement in a computer.
 \item Next comes the "sharing" part: obviously Alice must not reveal $A$ but she will have to "help" Bob choose the right matrix $B$, i.e. such that (\ref{coma}) holds. We will assume all entries are real numbers although one can adjust to complex ones as well. Hence in relationship (\ref{coma}) the symmetries can be discarded and only $\frac{N^2-N}{2}$ equations will be relevant: it means that Alice sends Bob the following system of equations in unknowns $b_{ij}$, $i=2,...,N, j=1,...,N$:

$$
5)\quad\sum_{j=1}^{N}a_{kj}b_{lj}=\sum_{j=1}^{N}a_{lj}b_{kj}, \quad\forall 1<l<k\leq N
$$
In the above equations Alice's secret key, $a_{ij}$ seems to be revealed: of course this would be very damaging but Alice can simply multiply each equation by a random number thus masking her matrix.\\
\item Bob considers a system of equations similar to Alice's above to which he adds item 5). When working with real values Bob deals with a system of $N^2-N$ polynomial equations and $N^2-N$ unknowns ($b_{ij}$, $i=2,...,N, j=1,...,N$). This system of equations clearly has solutions (i.e. Alice's own $a_{ij}$) but it is important to get a large number of (possibly infinitely many) solutions. This will hedge against an eavesdropper detecting matrix $B$. In an example below we display such a situation using Maple (infinitely many matrices $B$ corresponding to a given $A$); however we would like to obtain infinitely many $A$ for which there are infinitely many $B$ satisfying all items 1) through 5). It is not our scope here to go into a thorough study of the system of equations above, nevertheless we feel it is a very interesting question to settle the existence of infinitely many examples of matrices $A$ and $B$ as above. E.g. Maple is capable to calculate Gr\"{o}bner bases (theoretical tool that among other things tells whether a zero-dimensional system of polynomial equations has finitely many solutions) and find approximate solutions for the systems above. Finding Gr\"{o}bner bases is based on Buchberger algorithm and the process is time consuming even for $N=4$ ( see \cite{Bu}, also the Help section in Maple which contains practical details on these bases and more efficient algorithms).
\item Suppose $f=(a_1, a_2,\dots, a_{N^q})$ represents the secret message to be transmitted by Alice to Bob.
From the previous steps we obtained $A$ and $B$ that satisfy (\ref{coma}). By Theorem \ref{t2} equation (\ref{com}) holds. Now the communication continues as in Example \ref{e2}.
\end{enumerate}
\end{remark}

\begin{example}\label{e3} In this example the matrix $A$ allows for infinitely many matrices $B$ for which the above relation (\ref{coma}) holds. We have experimented with other $3\times 3$ and $4\times 4$ matrices $A$ for which Maple did not find infinitely many $B$ satisfying equation $5)$ in Remark \ref{theoretical}. At this point we do not know if such occurrences are rare, and we do not have yet a theoretical tool to characterize all such matrices.

\begin{itemize}
\item Alice has matrix $$A=
 \begin{pmatrix}
  \frac{1}{\sqrt{3}} &  \frac{1}{\sqrt{3}} &  \frac{1}{\sqrt{3}} \\
  \frac{\sqrt{2}}{2}& 0 &  -\frac{\sqrt{2}}{2}\\
 -\frac{\sqrt{6}}{6}  &  \frac{\sqrt{6}}{3} &  -\frac{\sqrt{6}}{6}
 \end{pmatrix}$$
Equation (\ref{coma}) in this case becomes $p2^{-1/2}-r2^{-1/2}+x6^{-1/2}-y(1/3)6^{1/2}+z6^{-1/2}=0$, and is sent to Bob, after multiplication by a random number (to mask it from possible eavesdroppers).
\item Replacing Bob's unknowns $[b_{i,j}]_{i=2,3}^{j=1,2,3}$ by $x,y,z,p,q,r$ the following system must be solved:
$$ x^2+y^2+z^2-1=0,\quad p^2+q^2+r^2-1=0$$
$$ x+y+z=0,\quad p+q+r=0$$
$$xp+yq+zr=0,\quad p2^{-1/2}-r2^{-1/2}+x6^{-1/2}-y(1/3)6^{1/2}+z6^{-1/2}=0$$
In this case (i.e. for $A$ above) Maple solve command finds infinitely many solutions indexed by (the free) variable ${\bf{r}}$ below (indeterminate $Z$ is a place-holder for the unknown in the quadratic equations):
$$ p = r+(1/2)\sqrt{2}RootOf(6Z^2+6r^2+3\sqrt{2}Z\sqrt{6}r-1)\sqrt{6}$$ $$q = -2r-(1/2)\sqrt{2}RootOf(6Z^2+6r^2+3\sqrt{2}Z\sqrt{6}r-1)\sqrt{6}$$
 $${\bf{r = r}}, x = RootOf(6Z^2+6r^2+3\sqrt{2}Z\sqrt{6}r-1)+(1/2)\sqrt{2)}\sqrt{6}r
$$
$$y = RootOf(6Z^2+6r^2+3\sqrt{2}Z\sqrt{6}r-1)$$
$$z = -(1/2)\sqrt{2}\sqrt{6}r-2RootOf(6Z^2+6r^2+3\sqrt{2}Z\sqrt{6}r-1)$$
\item Bob picks a value $r$ such that the quadratic equation $6Z^2+6r^2+3\sqrt{2}Z\sqrt{6}r-1=0$ has real solutions in indeterminate $Z$. Notice that such values can be chosen randomly in a subinterval of $[-1,1]$. E.g. for $r=0.2$  Bob sets up a matrix
    $$B=\begin{pmatrix}
    3^{-1/2} & 3^{-1/2} & 3^{-1/2}\\
    -.2226063221 & -.5690164837 & .7916228058\\
    -.7855654600 & .5855654600 & .2    \end{pmatrix}$$
Maple finds a numeric approximation when solving the systems of polynomial equations (it considers them with rational coefficients). Thus matrix $B$ is "almost" unitary. E.g. Maple gives the following computation :
$$B^*B= \left( \begin {array}{ccc}  0.9999999998& 0.0000000001& 0.0
\\ \noalign{\medskip} 0.0000000001& 0.9999999999&- 0.0000000001
\\ \noalign{\medskip} 0.0&- 0.0000000001& 1.0\end {array} \right)
$$
\item The signals/messages to be transmitted must be of length $3^q$. For example let $f=000 11 00000 1111111 000 2222222$ be a signal of length $27$ which is encoded as a step function

$$
f(x)=\left\{
\begin{array}{l l }
0, & \text{if } 0\leq x < 1/9 \\
1, & \text{if } 1/9 \leq x < 5/27\\
0, & \text{if } 5/27 \leq x < 10/27\\
1, & \text{if } 10/27 \leq x < 17/27\\
0, & \text{if } 7/27 \leq x < 20/27\\
2, & \text{if } 20/27\leq x <1
\end{array}\right.$$

\begin{center}
\includegraphics[width=2in]{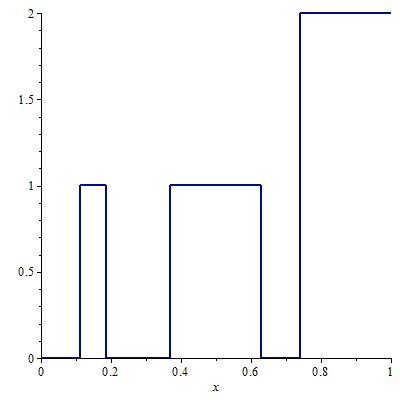}
\end{center}
i) The sequence $W_Af$ (without its first Walsh coefficient) which Alice sends to Bob:
\begin{align*}
&-0.5443310539,-0.05237828008,-0.1814436847, 0.2222222222, 0.1283000598,\\
&0.2618914004,0., -0.07407407407, -0.04536092117, 0.1666666667, 0.03207501497,\\
&-0.2222222222, 0.1360827635, -0.07856742012, 0.1283000598, 0., -0.09072184234,\\
&0.02618914004, 0.09622504490, 0.09259259259, -0.06415002993, 0.07856742012,\\
&0.04536092117, 0.03703703704, 0., -0.1047565601
\end{align*}

ii) Maple display of the graph of $W_B^{-1}W_Af$, which is sent to Alice:

\begin{center}
\includegraphics[width=2in]{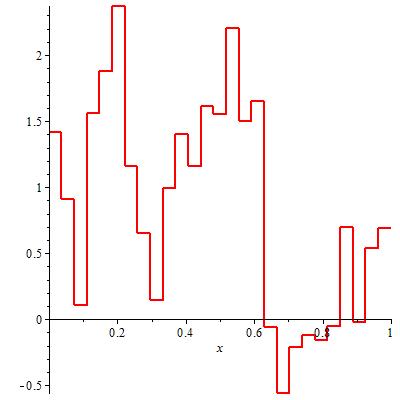}
\end{center}

iii) Alice applies her Walsh transform to Bob's function and sends him the sequence $W_AW_B^{-1}W_Af$:
\begin{align*}
&0.4268793977, 0.3417802807, -0.05238646443, 0.1424437841, 0.08209867227,\\
&0.3142683164, 0.2103987320, 0.005704364048, 0.01428020223, 0.1948148148,\\
&0.06725825079, -0.06424603320, 0.001060076263, -0.1578695927, -0.2267207154\\
&-0.1331355569, -0.01534027859, 0.05039404776, 0.003108220861, 0.06444444442\\
&-0.03427062570, 0.01804658637, -0.05818088527, -0.1209391520, 0.02910416584\\
&-0.06844063412
\end{align*}

iv) Maple graphs of $W_B^{-1}W_AW_B^{-1}W_Af$ and $f$ coincide . This is recovered by Bob by applying $W_B^{-1}$ to the sequence in iii):
\begin{center}
\includegraphics[width=2in]{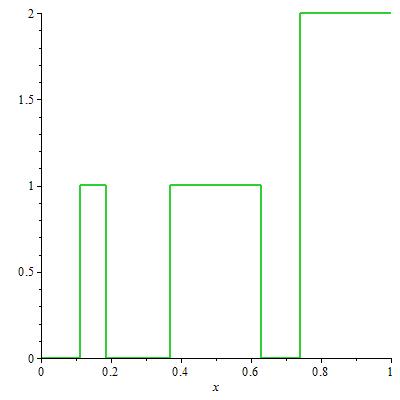}
\end{center}

\end{itemize}
One should add that Maple displays both graphs as "equal" which from the point of view of reading off values in the range $\{0,1,2\}$ is quite sufficient. Computationally the functions are {\it{almost}} equal. For example, in our Maple program we evaluated the value $$W_B^{-1}W_AW_B^{-1}W_Af(0.4)=1.185185185-.1069167165\sqrt{3}\sim 0.9999999998\sim 1=f(0.4)$$
\end{example}

\begin{acknowledgements}
The second named author would like to thank Professors Jose Flores for many discussions about Maple, and Catalin Georgescu for helpful insights related to Gr\"{o}bner bases.
\end{acknowledgements}

\bibliographystyle{alpha}	
\bibliography{eframes}

\end{document}